\documentclass{amsproc}

\usepackage{amsmath}
\usepackage{amssymb}
\usepackage{rotating}
\usepackage{graphicx}
\usepackage[tableposition=below]{caption}
\usepackage{bm}
\usepackage{color}

\DeclareGraphicsExtensions{.png,.pdf,.eps}
\usepackage[all,2cell,cmtip]{xy}
\usepackage{tikz}
\usepackage{longtable}
\newtheorem{theorem}{Theorem}[section]
\newtheorem{lemma}[theorem]{Lemma}
\newtheorem{corollary}[theorem]{Corollary}
\newtheorem{proposition}[theorem]{Proposition}

\newtheorem{claim}{Claim}[theorem]

\theoremstyle{definition}

\newtheorem{definition}[theorem]{Definition}

\newtheorem{remark}[theorem]{Remark}

\def\P{{\mathbb P}}

\def\Z{{\mathbb Z}}

\def\cO{{\mathcal{O}}}

\def\cOperatorname#1{\mathop{\rm #1}\nolimits}

\def\Pic{\cOperatorname{Pic}}

\def\rk{\cOperatorname{rk}}

\def\deg{\cOperatorname{deg}}

\def\det{\cOperatorname{det}}

\def\ME{{\cOperatorname{ME}}}

\newcommand{\cME}[1]{\cOverline{\ME}}

\title[Ulrich exterior powers of tangent bundles]
{Varieties with Ulrich exterior powers of the tangent bundle}
\author{Yuta Takahashi and Kiwamu Watanabe}
\address{Department of Mathematics, Faculty of Science and Engineering, Chuo University.
1-13-27 Kasuga, Bunkyo-ku, Tokyo 112-8551, Japan}
\email{yuta0630takahashi0302@gmail.com}
\address{Department of Mathematics, Faculty of Fundamental Science and Engineering, Chuo University.
1-13-27 Kasuga, Bunkyo-ku, Tokyo 112-8551, Japan}
\email{watanabe@math.chuo-u.ac.jp}
\thanks{The first author is partially supported by JST SPRING, Japan Grant Number JPMJSP2170.}
\thanks{The second author is partially supported by JSPS KAKENHI Grant Number 25K06940.}
\subjclass[2020]{14J45, 14J60, 13C14}
\keywords{Ulrich bundles, tangent bundles, Fano varieties, nef vector bundles}

\begin{document}

\begin{abstract}
We study smooth polarized projective varieties $(X,H)$ whose exterior powers of the tangent bundle are Ulrich. We prove that if $\bigwedge^rT_X$ is $H$-Ulrich for some $0<r<\dim X$, then $X$ is Fano and the intersection number $(-K_X)\cdot H^{n-1}$ is determined explicitly. We then classify the Picard number one case: the only example is the Veronese surface $(\mathbb P^2,\mathcal O_{\mathbb P^2}(2))$.
\end{abstract}

\maketitle

\section{Introduction}
Let $X$ be a smooth projective variety over $\mathbb C$, and let $H$ be a very ample divisor on $X$. A vector bundle $E$ on $(X,H)$ is called $H$-Ulrich if
$$
 H^q(X,E(-iH))=0
 \qquad\text{for every }1\leq i\leq \dim X \text{ and every } q.
$$
The terminology comes from commutative algebra. For the relation between this cohomological definition and Ulrich modules over homogeneous coordinate rings, see for instance \cite[§2.1, §3.2]{CMP21}. Ulrich bundles form a distinguished class of arithmetically Cohen--Macaulay bundles: they have the most linear possible behavior with respect to the embedding and are closely related to syzygies and Chow forms. What will be used below is that they are globally generated and $\mu_H$-semistable; in particular, they are nef. For further background on Ulrich bundles, see \cite{Bea18,Cos17,CMP21,ESW03}.

A central existence problem asks whether a given polarized variety carries an Ulrich bundle. A complementary and much more rigid problem is to determine when a vector bundle canonically attached to the variety is Ulrich. Benedetti, Montero, Prieto-Monta\~nez and Troncoso proved that the only smooth polarized varieties whose tangent bundle is Ulrich are the twisted cubic $(\mathbb P^1,\mathcal O_{\mathbb P^1}(3))$ and the Veronese surface $(\mathbb P^2,\mathcal O_{\mathbb P^2}(2))$ \cite{BMPT23}. Lopez and Raychaudhury subsequently studied twists $T_X(kH)$ and obtained numerical bounds and classification results for several values of $k$ \cite{LR24}. The present paper extends this rigidity problem from the tangent bundle to all intermediate exterior powers $\bigwedge^rT_X$.

This passage is not formal. For $r=1$, the global generation of $T_X$ forces $X$ to be homogeneous, a decisive point in \cite{BMPT23}. For $r>1$, the global generation or nefness of $\bigwedge^rT_X$ does not imply the corresponding property for $T_X$, so the homogeneous-space argument is no longer available. This distinction is reflected in the structure theory. If $T_X$ is nef, the theorem of Demailly--Peternell--Schneider describes the Albanese morphism after a finite \'{e}tale cover, and the Campana--Peternell conjecture predicts that the Fano varieties arising in this setting are rational homogeneous \cite{DPS94,CP91}. Nefness of $\bigwedge^rT_X$ is substantially weaker: a theorem of the second author still produces, after a finite \'{e}tale cover, a Fano fiber space over an Abelian variety, but allows a much broader class of varieties \cite{Wat23}. The question is therefore whether the stronger Ulrich condition restores rigidity in this wider setting.

Our first main theorem gives a structural answer without any assumption on the Picard number.

\begin{theorem}[$=$Theorem~\ref{thm2}]
Let $X$ be a smooth projective variety of dimension $n\geq2$, and let $H$ be a very ample divisor on $X$. If $\bigwedge^rT_X$ is $H$-Ulrich for some $0<r<n$, then $X$ is Fano and
$$
 (-K_X)\cdot H^{n-1}=\frac{n(n+1)}{n+2r}H^n.
$$
\end{theorem}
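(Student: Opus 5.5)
The plan is to derive the intersection-number identity from the standard first-Chern-class formula for Ulrich bundles, and to get the Fano property from the positivity properties of Ulrich bundles combined with the structure theorem of \cite{Wat23}. The second part is the one I expect to be hard.

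\emph{The numerical identity.} Put $E=\bigwedge^rT_X$, of rank $s=\binom nr$. For any $H$-Ulrich bundle $E$ of rank $s$ one has
$$c_1(E)\cdot H^{n-1}=\frac s2\,(K_X+(n+1)H)\cdot H^{n-1}.$$
To see this, the polynomial $\chi(E(tH))$ has degree $n$ and vanishes at $t=-1,\dots,-n$, so $\chi(E(tH))=\frac{sH^n}{n!}(t+1)\cdots(t+n)$. Comparing the $t^{n-1}$ coefficient with Riemann--Roch gives the formula. For the exterior power, $c_1(\bigwedge^rT_X)=\binom{n-1}{r-1}c_1(T_X)=\frac rn\binom nr(-K_X)$. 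Write $a=(-K_X)\cdot H^{n-1}$. Substituting gives $\frac rn a=\frac12(-a+(n+1)H^n)$, that is $a\cdot\frac{n+2r}{2n}=\frac{n+1}2H^n$, which is the asserted value. In particular $a>0$.

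\emph{The Fano property.} Since $E$ is Ulrich, it is globally generated, so $\det E=\binom{n-1}{r-1}(-K_X)$ is globally generated, hence $-K_X$ is nef. To show $-K_X$ is ample, I would argue that $-K_X\cdot C>0$ for every curve $C$. Suppose instead that $-K_X\cdot C=0$ for some curve $C$. Pull back to the normalization $\nu:\tilde C\to C$. Then $\nu^*E$ is globally generated of degree $0$, hence trivial. Consequently the sections of $E$ separate no directions along $C$, and the evaluation map $H^0(E)\otimes\mathcal O_X\to E$ contracts $C$. This says $C$ is contracted by the morphism defined by $\det E$.

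Next, invoke \cite{Wat23}, which applies because $E$ is nef. After a finite \'etale cover $X'\to X$, the Albanese map $X'\to A$ is a smooth Fano fibration. I would first rule out $\dim A>0$. If $\dim A>0$, then $T_{X'}\to f^*T_A\cong\mathcal O^{\oplus m}$ gives a quotient of $\bigwedge^rT_{X'}$ of the form $\bigwedge^{r-j}T_{X'/A}\otimes\mathcal O^{\oplus\binom mj}$. This should produce nonzero cohomology of $E(-iH)$ on $X$ by pushing forward along the \'etale cover. Alternatively it should make $c_1(E)$ fail to be big, which contradicts the Ulrich condition via Serre duality, $H^n(E(-nH))\cong H^0(E^\vee(K_X+nH))^\vee=0$. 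Once $A$ is a point, $X$ is simply connected with $-K_X$ nef and $h^i(\mathcal O_X)=0$. It then remains to exclude curves on which $-K_X$ is trivial. For this I would use the known criterion for Ulrich bundles (Lopez--Sierra type): if $\det E$ is not ample, then $X$ is covered by lines $L$ on which $E|_L$ is trivial. For a line $L$ with $T_X|_L\supseteq T_L=\mathcal O(2)$, the bundle $\bigwedge^rT_X|_L$ contains a summand of positive degree whenever $0<r<n$. Indeed, take $\mathcal O(2)\otimes\bigwedge^{r-1}$ of the remaining summands, all of which are nef. This contradicts triviality.

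The main obstacle is this last step, ampleness of $-K_X$. Specifically, I must make sure the degenerate locus of the Ulrich bundle really consists of curves along which $T_X$ can be compared with $T_C$. I would try the line-covering criterion first, and fall back on the fibration structure from \cite{Wat23} if that fails.
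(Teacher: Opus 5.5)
The intersection-number part of your proposal is correct and is the same computation as Proposition~\ref{prop1}. The gap is in the step where you rule out $\dim A>0$ after applying \cite{Wat23}. Neither mechanism you suggest works. You do not explain how a quotient of $f^\ast E$ would produce nonzero cohomology of some $E(-iH)$ on $X$, and there is no evident way to do so. A non-big determinant is also fully compatible with the Ulrich condition: $\cO(1,0)$ is Ulrich on $(\P^1\times\P^1,\cO(1,1))$ with $c_1^2=0$. So Serre duality gives no contradiction there.

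The missing idea is the $\mu_H$-semistability of Ulrich bundles, which survives the finite pullback $f$. With it, the quotients you already wrote down finish the argument:
\begin{itemize}
\item If $q=\dim A\geq r$, the quotient $\cO_{X'}^{\oplus\binom{q}{r}}$ of $\bigwedge^rT_{X'}$ has slope $0$. But $\mu_{f^\ast H}(\bigwedge^rT_{X'})=(\deg f)\,\mu_H(E)>0$ by your numerical identity.
\item If $0<q<r$, the quotient $\bigwedge^{r-q}T_{X'/A}$ has slope $\frac{r-q}{n-q}(-K_{X'})\cdot(f^\ast H)^{n-1}$. Since $K_{X'/A}=K_{X'}$, $r<n$ and $(-K_{X'})\cdot(f^\ast H)^{n-1}>0$, this is strictly less than $\frac{r}{n}(-K_{X'})\cdot(f^\ast H)^{n-1}=\mu_{f^\ast H}(\bigwedge^rT_{X'})$.
\end{itemize}
This is exactly the paper's proof. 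Moreover, once $A$ is a point, \cite{Wat23} already says $X'$ is Fano, and ampleness of $-K_X$ descends along the finite map $f$. So on this route no analysis of curves with $-K_X\cdot C=0$ is needed.

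Your line argument never uses $\dim A=0$, so it could replace \cite{Wat23} altogether. However, its final step is misjustified: $T_X$ is not known to be nef, so nothing tells you the other summands of $T_X|_L$ are nef. The repair is elementary:
\begin{itemize}
\item Write $T_X|_L\simeq\bigoplus_{i=1}^n\cO_L(a_i)$. If $\bigwedge^rT_X|_L$ is trivial, then $\sum_{i\in I}a_i=0$ for every $I$ with $|I|=r$.
\item For $i\neq j$, choose $J$ with $|J|=r-1$ and $i,j\notin J$, which is possible since $r\leq n-1$. Comparing $J\cup\{i\}$ and $J\cup\{j\}$ gives $a_i=a_j$.
\item Hence all $a_i=0$, which contradicts $\cO_L(2)\simeq T_L\hookrightarrow T_X|_L$.
\end{itemize}
Also, the Lopez--Sierra characterization gives a single line $L\subseteq X$ with $E|_L$ trivial whenever $\det E$ is not ample, and one line is all you need. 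A covering family of such lines is the criterion for non-bigness, not for non-ampleness.

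With these corrections you would have a genuinely different and shorter proof of the Fano property. It rests only on global generation and that external characterization of ampleness of $\det E$ for Ulrich bundles. The paper instead combines \cite{Wat23} with slope semistability, and so stays within its own preliminaries.
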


The Fano conclusion is stronger than the positivity of the displayed intersection number. The key point is that the structure theorem of \cite{Wat23} allows an Abelian base, whereas the $\mu_H$-semistability of the Ulrich bundle $\bigwedge^rT_X$, together with the filtration induced by the relative tangent sequence, rules out a positive-dimensional Abelian base. Thus the Ulrich condition eliminates precisely the non-Fano part permitted by nefness alone. This slope-theoretic reduction is the first essential difference from the tangent-bundle case.

Under the additional assumption $\rho(X)=1$, we obtain a complete classification.

\begin{theorem}[$=$Theorem~\ref{main}]
Let $X$ be a smooth projective variety of dimension $n\geq 2$ with $\rho(X)=1$, and let $H$ be a very ample divisor on $X$. Assume that $\bigwedge^r T_X$ is $H$-Ulrich for some $0<r<n$. Then
$$
 (X,H) \simeq \left(\mathbb P^2,\mathcal O_{\mathbb P^2}(2)\right).
$$
Conversely, $T_{\mathbb P^2}$ is $\mathcal O_{\mathbb P^2}(2)$-Ulrich.
\end{theorem}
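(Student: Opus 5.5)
Since $\rho(X)=1$ and $X$ is Fano by Theorem~\ref{thm2}, write $\Pic X=\Z L$ with $L$ ample, $-K_X=\iota L$ (Fano index $\iota$) and $H=hL$ with $h\geq1$. Put $d=L^n$. The identity of Theorem~\ref{thm2} becomes
$$
 \iota\, h^{n-1}d=\frac{n(n+1)}{n+2r}h^n d,\qquad\text{i.e.}\qquad \iota=\frac{n(n+1)}{n+2r}\,h .
$$
Since $r\geq1$, this gives $\iota<n+1$. The Kobayashi--Ochiai bound $\iota\leq n+1$, with equality only for $\P^n$, then already shows that $X\neq\P^n$ unless something special happens. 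I would avoid that branch, though. The sharper input is the Ulrich Chern class identity $c_1(E)\cdot H^{n-1}=\frac{\rk E}{2}(K_X+(n+1)H)\cdot H^{n-1}$. That identity is presumably how the formula in Theorem~\ref{thm2} was derived, so it carries no new information. New constraints have to come from the vanishings themselves.

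\textbf{Step 1: extract vanishings.} The Ulrich condition gives $H^0(\bigwedge^rT_X(-H))=0$. It also gives $\chi(\bigwedge^rT_X(-iH))=0$ for $1\leq i\leq n$, and since $\bigwedge^r T_X$ is a torsion-free sheaf these vanishings are strong. Using Serre duality, $\bigwedge^rT_X\simeq\Omega^{n-r}_X(-K_X)$, so $H^q(\Omega^{n-r}_X((\iota-ih)L))=0$ for all $q$ and $1\le i\le n$. If for some $i$ the twist $\iota-ih$ equals $0$, this gives $H^{n-r,q}(X)=0$ for all $q$. That is a contradiction at $q=n-r$, since $h^{n-r,n-r}\geq1$ (a power of $L$). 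Hence $\iota/h\notin\{1,\dots,n\}$, i.e. $\frac{n(n+1)}{n+2r}\notin\Z\cap[1,n]$.

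More generally, Akizuki--Nakano vanishing gives $H^q(\Omega^p(mL))=0$ for $p+q>n$ and $m>0$. Combined with the Ulrich vanishings, this should pin down the Euler characteristics of all twists. The function $\chi(\Omega^{n-r}(tL))$ is a polynomial of degree $n$ in $t$. It vanishes at the $n$ points $t=\iota-ih$, so
$$
 \chi(\Omega^{n-r}_X(tL))=c\prod_{i=1}^n(t-\iota+ih),\qquad c=\binom nr\frac{d}{n!}.
$$
Evaluating at $t=0$ gives $\chi(\Omega^{n-r})=\sum_q(-1)^qh^{n-r,q}$. This should be compatible with Hodge symmetry together with the Serre-duality symmetry $t\mapsto -t$ relating $\Omega^{n-r}$ and $\Omega^{r}$.

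\textbf{Step 2: slope/nefness constraints.} Ulrich bundles are globally generated, so $\bigwedge^rT_X$ is nef. I would then restrict to a minimal rational curve $f:\P^1\to X$ with $f^*T_X=\cO(2)\oplus\cO(1)^{p}\oplus\cO^{n-1-p}$ and $p+2=\iota\cdot(L\cdot C)$. Global generation of $\bigwedge^rT_X$ is compatible with this splitting for any $p$, so that gives no constraint. Instead I would use the Ulrich property restricted to lines in the embedding. For an Ulrich bundle $E$ and a line $\ell$ of the embedding by $H$, one has $E|_\ell\simeq\cO_\ell^{a}\oplus\cO_\ell(1)^b$, and I would check whether this holds for lines (it holds for Ulrich bundles on curves of degree 1 cut by linear sections? likely not). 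More robust is the bound $\iota\ge$ something via $H^0(\bigwedge^r T_X(-H))=0$ together with $h^0(\bigwedge^rT_X)=\rk\cdot\deg$. The latter follows because $h^0(E)=\rk E\cdot H^n$ for Ulrich $E$, giving
$$
 h^0(\bigwedge^rT_X)=\binom nr h^n d .
$$
Via Bott-type or Kodaira computations, $h^0(\bigwedge^r T_X)$ should be bounded by the dimension of $\bigwedge^r$ of infinitesimal automorphisms or similar. I would aim to show that this forces $X$ to be quasi-homogeneous with small $h$.

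\textbf{Step 3: conclude.} Combining the integrality $\iota=\frac{n(n+1)h}{n+2r}\in\Z$ with $\iota\le n+1$, $h\ge1$, and the exclusion from Step 1, I would derive that $\iota\geq n$ unless $n$ is small. The exclusion requires $\frac{n(n+1)}{n+2r}\notin\{1,\dots,n\}$; for $r=1$ the value is $\frac{n(n+1)}{n+2}$, not an integer when $n\ge3$. Then $\iota\ge n$ puts $X$ among $\P^n$ or quadrics by Kobayashi--Ochiai. For $\P^n$ and $Q^n$ one can check directly, via Bott vanishing, which $\bigwedge^rT(-iH)$ have all cohomology zero. This should leave only $\P^2$ with $h=2$, $r=1$, where $\iota=3=\frac{6}{4}\cdot2$. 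The converse is the classical fact, checked by Bott's formula: $T_{\P^2}(-2)$ and $T_{\P^2}(-4)\simeq\Omega_{\P^2}(-1)$ are acyclic. The main obstacle is the small-index case $\iota<n$. There I expect to need the full polynomial identity from Step 1 evaluated at several integers, and comparison with $h^0$, to bound $h$ and $\iota$. This is the step I am least confident about.
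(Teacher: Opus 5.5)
\textbf{Step 1.} Your Step~1 is the right mechanism. It is exactly the obstruction of Corollary~\ref{cor4}: $H^{n-r}(X,\Omega_X^{n-r})\neq0$ contradicts the Ulrich vanishing when $\iota=ih$ for some $1\le i\le n$. But you do not extract what it gives, and one claim is false. The assertion that $r\ge1$ gives $\iota<n+1$ fails in general: for $h=2$ and $n=2r$ one gets $\iota=n+1$, which is precisely the Veronese surface. So Kobayashi--Ochiai, including its equality case, cannot be avoided.

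Used together, the two inputs reduce everything to one case.
\begin{enumerate}
\item If $h=1$, then $\iota=n(n+1)/(n+2r)$ is an integer in $[1,n]$, which your exclusion forbids.
\item Then $\iota\le n+1$ forces $hn\le n+2r<3n$, so $h=2$ and $n\le 2r$.
\item If $n=2r$, the equality case gives $(X,H)\simeq(\P^{2r},\cO(2))$. For $r\ge2$ this is excluded, because $\bigwedge^rT_{\P^{2r}}(-2)$ is a nonzero quotient of $\cO(r-2)^{\oplus\binom{2r+1}{r}}$ and so has sections. For $r=1$ it gives $(\P^2,\cO(2))$.
\item With $h=2$, your exclusion says $\iota$ is odd, and integrality of $2n(n+1)/(n+2r)$ then forces $n$ even.
\end{enumerate}
What remains is $h=2$, $r\ge2$, $n<2r$, and for this case your proposal contains no argument. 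This is the genuine gap.

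\textbf{Why Step 3 cannot close it.} Your plan to derive $\iota\ge n$ from integrality, $\iota\le n+1$ and the Step~1 exclusion cannot work. For instance, $n=20$, $r=18$, $h=2$ gives $\iota=840/56=15<n$, and this satisfies all of those constraints as well as the parity conditions. The small-index case must therefore be killed geometrically, and Step~2 does not do it:
\begin{itemize}
\item the splitting type on minimal rational curves is unconstrained, as you note yourself;
\item the claim about restrictions to lines is unverified;
\item no bound on $h^0(\bigwedge^rT_X)$ is actually produced;
\item the formula for $\chi(\Omega_X^{n-r}(tL))$ only re-encodes the vanishings you assumed, so it cannot separate $h^0$ from higher cohomology.
\end{itemize}

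\textbf{The missing idea} (Proposition~\ref{prop14}) is to produce a nonzero section of $\bigwedge^rT_X(-H)\simeq\Omega_X^{n-r}((\iota-2)L)$, contradicting the Ulrich vanishing at $i=1$.
\begin{enumerate}
\item Put $C=n+1-\iota$ and $S=2r-n$; both are positive and even. The identity $n(S-2C)=S(C-1)$ and the parity of $n$ force $S\ge 2C+4$.
\item Count roots of the Hilbert polynomial $\chi(X,tL)$. It vanishes at $-1,\dots,-(\iota-1)$ by Kodaira vanishing. Serre duality makes it symmetric under $t\mapsto-\iota-t$. For odd $m>0$ one has $\chi(mL)=h^0(mL)$, since $mL=K_X+\frac{\iota+m}{2}H$. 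If $H^0(X,mL)=0$ for every odd $m\le S-C-3$, there would be more than $n$ roots. Hence $H^0(X,mL)\neq0$ for some such $m$.
\item With $q=(\iota-2-m)/2\ge n-r+1$, the bundle $\Omega_X^{n-r}(qH)$ is globally generated, being a quotient of $\Omega^{n-r}_{\P^N}(q)|_X$. Tensoring a section of it with the section of $mL$ gives a nonzero section of $\Omega_X^{n-r}((\iota-2)L)$, which is the contradiction.
\end{enumerate}
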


The second theorem treats every intermediate exterior power uniformly. For $r=1$ and $n\geq2$, it recovers the Picard number one part of \cite{BMPT23}; for arbitrary $r$, it shows that the weaker positivity of an exterior power yields no new Ulrich examples. The proof is not obtained by reducing to $r=1$. By the first theorem, $X$ is Fano. Writing
$$
 \Pic(X)=\mathbb Z[L],\qquad -K_X=i_XL,\qquad H=aL,
$$
the Chern class formula and the Kobayashi--Ochiai bound force $a=1$ or $a=2$. The decisive new ingredient is the Hodge-theoretic non-vanishing
$$
 H^p(X,\Omega_X^p)\neq0
 \qquad (0\leq p\leq n),
$$
which, through $\bigwedge^rT_X\simeq\Omega_X^{n-r}\otimes\mathcal O_X(-K_X)$, gives an obstruction independent of $r$. It excludes $a=1$ without a case-by-case classification by coindex. For $a=2$, parity and index restrictions, together with the Hilbert polynomial, Kodaira vanishing, and global generation of suitable twisted differential forms, exclude $r\geq2$. Hence $r=1$ and $n=2$. This Hodge-theoretic mechanism is the second essential difference from the proof for the tangent bundle.

We also prove related non-existence results for twists of cotangent bundles, restricted ambient cotangent bundles, exterior powers of the restricted ambient tangent bundle, and normal bundles. These results are independent of the Picard number one classification and further illustrate the rigidity of the Ulrich condition for natural differential and ambient bundles.

The paper is organized as follows. Section~2 collects the preliminary results on Ulrich bundles, exterior powers, nef tangent bundles, and Hodge-theoretic non-vanishing. Section~3 proves the Fano reduction theorem and establishes numerical restrictions for $\bigwedge^rT_X\otimes M$, where $M$ is a line bundle on $X$. Section~4 treats twisted exterior powers in Picard number one, and Section~5 proves the classification theorem and verifies the Veronese surface example. Finally, Section~6 contains the independent non-existence results for cotangent, ambient, and normal bundles, together with a comparison with known examples on Enriques surfaces.

\section*{Notation and conventions}
Throughout this paper, we work over the field of complex numbers $\mathbb C$. Unless otherwise stated, $X$ denotes a smooth projective variety of dimension $n$, and $H$ denotes a very ample divisor on $X$. We use standard notation as in \cite{Har77,CMP21}.
\begin{itemize}
\item For a coherent sheaf $F$ on $X$ and an integer $t$, we write $F(tH):=F\otimes \mathcal O_X(tH)$.
\item We denote by $T_X$ and $\Omega_X^1$ the tangent bundle and the cotangent bundle of $X$, respectively, and we write $\Omega_X^p:=\bigwedge^p\Omega_X^1$.
\item If $H$ defines an embedding $\iota\colon X\hookrightarrow\mathbb P^N$, we denote by $N_X=N_{X/\mathbb P^N}$ the normal bundle.
\item The Picard number of $X$ is denoted by $\rho(X)$.
\item For a vector bundle or a torsion-free sheaf $E$ on $X$, its rank is $\operatorname{rk}(E)$ and its $i$-th Chern class is $c_i(E)$.
\item We denote by $A^1(X)$ the codimension-one Chow group of $X$. Since $X$ is smooth, divisor classes and first Chern classes are regarded as elements of $A^1(X)$. When $A^1(X)\simeq \mathbb Z[H]$, we identify $c_1(E)$ with the integer $c_1$ determined by $c_1(E)=c_1H$ in $A^1(X)$.
\item Let $E$ be a torsion-free sheaf on $X$. The $H$-slope of $E$ is
$$
\mu_H(E):=\frac{c_1(E)\cdot H^{n-1}}{\operatorname{rk}(E)}.
$$
The sheaf $E$ is $\mu_H$-semistable if, for every nonzero proper subsheaf $F\subset E$, one has $\mu_H(F)\leq \mu_H(E)$.
\item A vector bundle $E$ on $X$ is nef if the tautological line bundle $\mathcal O_{\mathbb P(E)}(1)$ is nef.
\end{itemize}

\section{Preliminaries}
We collect the standard facts used later.  They are grouped according to their role in the proof: first the basic properties of Ulrich bundles, then $\mu_H$-semistability and exterior powers, and finally the two geometric inputs, namely a structure theorem of varieties with nef exterior powers of the tangent bundle and a simple Hodge-theoretic non-vanishing statement.

\begin{definition}\label{def:ulrich}
Let $X$ be a smooth projective variety of dimension $n$, and let $H$ be a very ample divisor on $X$. A vector bundle $E$ on $X$ is called $H$-Ulrich if
$$
  H^q(X,E(-iH))=0
$$
for every $1\leq i\leq n$ and every $q$.
\end{definition}

The following standard properties explain why Ulrich bundles are useful in this paper: they provide both positivity and numerical restrictions.

\begin{proposition}\label{prop:c1}
Let $X$ be a smooth projective variety of dimension $n$, let $H$ be a very ample divisor on $X$, and let $E$ be an $H$-Ulrich vector bundle on $X$. Then the following hold.
\begin{enumerate}
\item $E$ is globally generated; in particular, $E$ is nef.
\item $E$ is $\mu_H$-semistable.
\item The first Chern class satisfies
$$
c_1(E)\cdot H^{n-1}=\frac{\operatorname{rk}(E)}{2}\left(K_X+(n+1)H\right)\cdot H^{n-1}.
$$
\end{enumerate}
\end{proposition}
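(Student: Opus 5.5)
The plan is to obtain all three properties from the standard linear-algebra reduction to hyperplane sections, together with the Castelnuovo--Mumford regularity criterion. The argument proceeds in three steps.

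\emph{Global generation (1).} Since $H^q(X,E(-iH))=0$ for $1\le i\le n$ and all $q$, in particular $H^q(X,E(-qH))=0$ for $1\le q\le n$. Thus $E$ is $0$-regular in the sense of Castelnuovo--Mumford with respect to the very ample $H$. Mumford's theorem then gives that $E$ is globally generated. A globally generated bundle is a quotient of a trivial bundle, so $\mathcal O_{\mathbb P(E)}(1)$ is globally generated and hence nef. This gives (1).

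\emph{Chern class formula (3).} I would use the Hilbert polynomial $\chi(E(tH))$, which is a polynomial of degree $n$ in $t$ with leading term $\operatorname{rk}(E)H^n t^n/n!$. The Ulrich condition says that it vanishes at $t=-1,\dots,-n$, so
$$\chi(E(tH))=\frac{\operatorname{rk}(E)H^n}{n!}(t+1)(t+2)\cdots(t+n).$$
Comparing coefficients of $t^{n-1}$ via Riemann--Roch gives the formula. The $t^{n-1}$ coefficient on the Riemann--Roch side is
$$\frac{1}{(n-1)!}\Bigl(c_1(E)\cdot H^{n-1}-\tfrac{\operatorname{rk}(E)}{2}K_X\cdot H^{n-1}\Bigr),$$
since $\mathrm{td}_1=-K_X/2$. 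On the product side it is $\frac{\operatorname{rk}(E)H^n}{n!}\cdot\frac{n(n+1)}{2}$. Equating the two and rearranging yields
$$c_1(E)\cdot H^{n-1}=\frac{\operatorname{rk}(E)}{2}(K_X+(n+1)H)\cdot H^{n-1}.$$
This is a routine bookkeeping step.

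\emph{Semistability (2).} This is the main obstacle. I would argue by induction on $n$, using that the restriction of an Ulrich bundle to a general smooth hyperplane section $Y\in|H|$ is $H|_Y$-Ulrich. Restriction works via the sequences $0\to E(-(i+1)H)\to E(-iH)\to E|_Y(-iH)\to 0$ for $1\le i\le n-1$, and the slope formula (3) is compatible with restriction. On a curve, an Ulrich bundle $E$ satisfies $H^0(E(-H))=H^1(E(-H))=0$, and it suffices to show that any subbundle $F\subset E$ has $\mu(F)\le\mu(E)=g-1+\deg H$. Here $E(-H)$ has $\chi=0$ and no sections. If $\mu(F)>\mu(E)$, then $\chi(F(-H))>0$ by Riemann--Roch, so $F(-H)$ would have sections, hence so would $E(-H)$, a contradiction.

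In higher dimension, a destabilizing saturated subsheaf $F$ restricts, by Mehta--Ramanathan for general complete intersection curves of high multiples of $H$, to a destabilizing subsheaf. The subtlety is that the restricted bundle is Ulrich only for $H$, not for $mH$. To avoid this, I would instead argue directly with the Hilbert polynomial of $F$ in the spirit of Eisenbud--Schreyer. Consider $F\subset E$ with $H^0(F(-H))\subset H^0(E(-H))=0$. Then use the linear resolution of $E$ on $\mathbb P^N$: the pushforward is $\mathcal O^{\oplus r d}$-resolved, and Ulrich modules are semistable as in \cite[Thm.~2.9]{CMP21}. If the direct argument gets stuck, I would simply cite this standard result, which is the route I expect the paper itself to take.
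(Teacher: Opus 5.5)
Your arguments for (1) and (3) are correct. They are the standard proofs behind the paper's citations; the paper gives no argument of its own here, quoting \cite[Corollary~3.2.10 and Proposition~3.3.14]{CMP21} for (1)--(2) and \cite[Lemma~3.2(i)]{LR24} for (3). For (1), Castelnuovo--Mumford $0$-regularity (the vanishing $H^q(X,E(-qH))=0$ for $q\geq1$) gives global generation. For (3), you compare $t^{n-1}$-coefficients of $\chi(E(tH))=\frac{\operatorname{rk}(E)H^n}{n!}(t+1)\cdots(t+n)$ with Riemann--Roch. These are what the cited references do, so on these two items you only make the statement self-contained.

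For (2) you end up where the paper does, with a citation. The paper points to \cite[Proposition~3.3.14]{CMP21}; your ``Theorem~2.9'' looks like Casanellas--Hartshorne's numbering rather than that of \cite{CMP21}. However, your reason for abandoning the restriction argument is mistaken, and that argument actually closes.

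Mehta--Ramanathan concerns the converse direction, namely that semistability of $E$ survives restriction. You only need that a destabilizing subsheaf stays destabilizing after restriction. That is elementary and works for $H$ itself, with no passage to $mH$:
\begin{enumerate}
\item Replace a destabilizing subsheaf by its saturation, which only raises its slope. Now $F\subset E$ has $\mu_H(F)>\mu_H(E)$ and $E/F$ torsion-free. Hence $F$ and $E/F$ are locally free off a closed set $Z$ of codimension $\geq2$.
\item A general complete intersection curve $C=Y_1\cap\cdots\cap Y_{n-1}$ with $Y_i\in|H|$ is smooth, irreducible and disjoint from $Z$.
\item So $0\to F|_C\to E|_C\to (E/F)|_C\to0$ is an exact sequence of vector bundles, with $\mu(F|_C)=\mu_H(F)$ and $\mu(E|_C)=\mu_H(E)$.
\item Iterating your restriction sequences shows that $E|_C$ is $H|_C$-Ulrich.
\item Your curve argument then finishes. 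If $\mu(F|_C)>\mu(E|_C)=g(C)-1+\deg H|_C$, then $\chi(F|_C(-H))>0$, which forces $H^0(C,E|_C(-H))\neq0$, a contradiction.
\end{enumerate}

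By contrast, the paragraph ``in the spirit of Eisenbud--Schreyer'' is not an argument as written. For $n\geq2$ the vanishing $H^0(F(-H))=0$ alone does not bound $\mu_H(F)$, because the higher cohomology of $F(-H)$ is uncontrolled. The genuine version of that route takes a finite linear projection $\pi\colon X\to\mathbb P^n$ with $\pi_*E$ trivial. It then compares the slopes of $\pi_*F\subset\pi_*E$ using Grothendieck--Riemann--Roch.
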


\begin{proof}
The global generation and $\mu_H$-semistability are standard consequences of the Ulrich condition; see \cite[Corollary~3.2.10 and Proposition~3.3.14]{CMP21}.  The first Chern class formula is \cite[Lemma~3.2 (i)]{LR24}.
\end{proof}

We shall use $\mu_H$-semistability only through the following two elementary facts.

\begin{proposition}\label{prop:quot}
Let $X$ be a smooth projective variety, let $H$ be an ample divisor on $X$, and let $E$ be a torsion-free sheaf on $X$. If $E$ is $\mu_H$-semistable and $E\twoheadrightarrow Q$ is a nonzero torsion-free quotient sheaf on $X$, then
$$
  \mu_H(E)\leq \mu_H(Q).
$$
\end{proposition}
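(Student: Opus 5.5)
Let $K\subset E$ denote the kernel of the given surjection $E\twoheadrightarrow Q$. We treat two cases. If $K=0$, then $Q\simeq E$ and the inequality is an equality, so there is nothing to prove. Hence we may assume $K\neq 0$. Since $Q\neq 0$, the subsheaf $K$ is then a nonzero proper subsheaf of $E$. It is torsion-free, being a subsheaf of the torsion-free sheaf $E$. By the definition of $\mu_H$-semistability in the Notation and conventions we get $\mu_H(K)\leq\mu_H(E)$.

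Next we use additivity of rank and first Chern class on the exact sequence
$$
0\to K\to E\to Q\to 0 .
$$
This gives $\operatorname{rk}(E)=\operatorname{rk}(K)+\operatorname{rk}(Q)$ and $c_1(E)=c_1(K)+c_1(Q)$. Here $c_1$ of a coherent sheaf is taken via a locally free resolution on the smooth variety $X$, which is additive in short exact sequences. Both ranks are positive, because $K$ and $Q$ are nonzero torsion-free sheaves.

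Intersecting with $H^{n-1}$ gives
$$
\operatorname{rk}(E)\,\mu_H(E)=\operatorname{rk}(K)\,\mu_H(K)+\operatorname{rk}(Q)\,\mu_H(Q).
$$
So $\mu_H(E)$ is a weighted average of $\mu_H(K)$ and $\mu_H(Q)$ with positive weights. We know $\mu_H(K)\le\mu_H(E)$, so
$$
\operatorname{rk}(Q)\,\mu_H(Q)=\operatorname{rk}(E)\,\mu_H(E)-\operatorname{rk}(K)\,\mu_H(K)\ge(\operatorname{rk}(E)-\operatorname{rk}(K))\,\mu_H(E)=\operatorname{rk}(Q)\,\mu_H(E).
$$
Dividing by $\operatorname{rk}(Q)>0$ gives $\mu_H(E)\le\mu_H(Q)$.

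The only step needing care is to ensure that $K$ is a nonzero proper subsheaf with positive rank, so that the division makes sense. This is exactly where the hypotheses that $Q$ is nonzero and torsion-free are used. A nonzero torsion quotient would have rank $0$, and then its slope would not be defined.
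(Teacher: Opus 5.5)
Your proof is correct and follows the same route as the paper. Both take the kernel $K$, dispose of the case $K=0$, use semistability to get $\mu_H(K)\le\mu_H(E)$, and conclude from $\operatorname{rk}(E)\mu_H(E)=\operatorname{rk}(K)\mu_H(K)+\operatorname{rk}(Q)\mu_H(Q)$; you also spell out why $\operatorname{rk}(K),\operatorname{rk}(Q)>0$ and why $c_1$ is additive, which the paper leaves implicit.
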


\begin{proof}
This is the quotient criterion for slope semistability; see \cite[Proposition~5.7.6(a)]{Kob87}. For completeness, we recall the short argument. Let $K$ be the kernel of $E\twoheadrightarrow Q$. If $K=0$, then there is nothing to prove. Otherwise $K$ is a nonzero subsheaf of $E$, and the $\mu_H$-semistability of $E$ gives $\mu_H(K)\leq\mu_H(E)$. Since
$$
\operatorname{rk}(E)\mu_H(E)=\operatorname{rk}(K)\mu_H(K)+\operatorname{rk}(Q)\mu_H(Q),
$$
we must have $\mu_H(E)\leq\mu_H(Q)$.
\end{proof}

\begin{lemma}[{\cite[Lemma~3.2.2]{HL10}}]\label{lem:pull}
Let $f\colon Y\longrightarrow X$ be a finite morphism of normal projective varieties, and let $H$ be an ample divisor on $X$. Let $\mathcal F$ be a coherent $\mathcal O_X$-module of dimension $\dim X$. Then $\mathcal F$ is $\mu_H$-semistable if and only if $f^\ast\mathcal F$ is $\mu_{f^\ast H}$-semistable.
\end{lemma}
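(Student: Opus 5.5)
The plan is to work with the Harder--Narasimhan filtration and use that pullback and pushforward along a finite map have controlled behaviour on first Chern classes and ranks. Set $d=\deg f$ and $n=\dim X$. Since $f$ is finite and surjective between normal projective varieties of the same dimension, $f^\ast H$ is ample. For any torsion-free sheaf $\mathcal G$ on $X$, the projection formula gives
$$
c_1(f^\ast\mathcal G)\cdot (f^\ast H)^{n-1}=d\,c_1(\mathcal G)\cdot H^{n-1}.
$$
Ranks are preserved, so $\mu_{f^\ast H}(f^\ast\mathcal G)=d\,\mu_H(\mathcal G)$. On normal varieties, $c_1$ is understood via Weil divisor classes paired with $H^{n-1}$; this is well defined because the singular locus has codimension at least $2$. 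Since the statement only concerns semistability, we may also replace $\mathcal F$ by $\mathcal F$ modulo torsion.

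\emph{Direction ($\Leftarrow$).} Suppose $f^\ast\mathcal F$ is semistable and $\mathcal G\subset\mathcal F$ is a subsheaf.
\begin{itemize}
\item The image of $f^\ast\mathcal G\to f^\ast\mathcal F$ has the same rank as $\mathcal G$, since pullback is exact at the generic point.
\item Its $c_1$ differs from $c_1(f^\ast\mathcal G)$ only by the kernel, which is torsion. Away from codimension $2$ the kernel is supported on the ramification locus, and there it can only increase $c_1$ of the image; if needed, we pass to saturation to handle this.
\end{itemize}
So $d\,\mu_H(\mathcal G)\le\mu(\text{image})\le\mu(f^\ast\mathcal F)=d\,\mu_H(\mathcal F)$, which gives $\mu_H(\mathcal G)\le\mu_H(\mathcal F)$.

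\emph{Direction ($\Rightarrow$), which is the main obstacle.} Suppose $\mathcal F$ is semistable and let $\mathcal E\subset f^\ast\mathcal F$ be the maximal destabilizing subsheaf; we must show it descends. My first approach is to reduce to the Galois case.
\begin{itemize}
\item Take the Galois closure $g\colon Z\to Y\to X$, with group $G$, where $Z$ is the normalization of $X$ in the Galois closure of the function field extension.
\item By the $(\Leftarrow)$ direction applied to $Z\to Y$, it suffices to show that $g^\ast\mathcal F$ is semistable.
\item On $Z$, the maximal destabilizing subsheaf of $g^\ast\mathcal F$ is unique, hence $G$-invariant.
\item A $G$-invariant saturated subsheaf descends, by taking $G$-invariants of $g_\ast$, to a subsheaf $\mathcal F'\subset\mathcal F$ with $g^\ast\mathcal F'$ agreeing with it in codimension $1$.
\item The slope relation above then shows that $\mathcal F'$ destabilizes $\mathcal F$, which is a contradiction.
\end{itemize}

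An alternative avoids Galois closure. Push forward: $f_\ast\mathcal E\subset f_\ast f^\ast\mathcal F$, and compose with the trace map $f_\ast f^\ast\mathcal F\to\mathcal F$, which is defined over normal varieties. One would then need to compare slopes of $f_\ast\mathcal E$, whose $c_1$ involves the branch divisor. This is messier, so I would use the Galois-descent argument.

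The delicate points are the descent in codimension $1$ on the normal variety $Z$ and the bookkeeping of reflexive hulls. Both are handled by working on the big open set where $X$, $Z$, and the sheaves are smooth or locally free and $g$ is flat; this suffices because slopes only see codimension $1$.
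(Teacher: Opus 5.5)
The paper does not prove this lemma; it only cites \cite[Lemma~3.2.2]{HL10}. Your overall plan is the argument of that reference. Under pullback, slopes are multiplied by $\deg f$. For the converse, one passes to the normalization in the Galois closure and descends the $G$-invariant maximal destabilizing subsheaf.

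\textbf{The easy direction.} The step that is actually wrong is your justification of $d\,\mu_H(\mathcal G)\le\mu(I)$ in the direction ($\Leftarrow$). Here $I$ is the image of $f^\ast\mathcal G\to f^\ast\mathcal F$ and $K$ is its kernel. From $0\to K\to f^\ast\mathcal G\to I\to 0$ one gets $c_1(I)=c_1(f^\ast\mathcal G)-c_1(K)$, and $c_1(K)$ is effective when $K$ is torsion. So a kernel with divisorial support (for instance along the ramification locus, as you suggest) would \emph{lower} $\mu(I)$ and break exactly the inequality you need. Passing to the saturation of $I$ gives no lower bound in terms of $\mu_H(\mathcal G)$.

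The correct reason is that no such kernel exists. At a codimension-one point $x\in X$, the ring $\mathcal O_{X,x}$ is a DVR and $(f_\ast\mathcal O_Y)_x$ is finitely generated and torsion-free, hence free. So $f$ is flat over an open set whose complement has codimension at least $2$, whether or not $f$ is ramified there. Since $f$ is finite, $f^\ast$ is then exact off a closed subset of codimension at least $2$ in $Y$. Hence $K$ is supported in codimension $\ge 2$ and $\mu(I)=\mu(f^\ast\mathcal G)=d\,\mu_H(\mathcal G)$ exactly. You only invoke this flatness in codimension one at the very end, but it is needed already here. It is also what underlies your identity $c_1(f^\ast\mathcal G)=f^\ast c_1(\mathcal G)$ used in the projection formula.

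\textbf{The descent step.} Flatness on a big open set does not by itself settle descent, because $g$ is still ramified along a divisor. There, $G$-invariance alone is not enough: for $z\mapsto z^2$ on $\mathbb P^1$, the $G$-invariant subsheaf $\mathcal O_Z(-R)\subset\mathcal O_Z$ is not a pullback at the ramification points.

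What makes your claim true is the saturation hypothesis you correctly included. Work where $\mathcal F$ is locally free and use $(g_\ast\mathcal O_Z)^G=\mathcal O_X$. Then $\mathcal F':=(g_\ast E)^G=\mathcal F\cap g_\ast E$ is saturated in $\mathcal F$, because $E$ is saturated in $g^\ast\mathcal F$. Hence $\mathcal F'$ is a direct summand of $\mathcal F$ at codimension-one points of $X$. It follows that $g^\ast\mathcal F'\subseteq E$ are both saturated in $g^\ast\mathcal F$ in codimension one. They have equal rank by Galois descent at the generic point, so they coincide there, and $\mu_{g^\ast H}(E)=\deg g\cdot\mu_H(\mathcal F')$.

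With these two points made precise, your argument proves the lemma.
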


Next we recall two standard facts about exterior powers. The first gives the Chern class calculation used throughout the paper, and the second is used to analyze the relative tangent sequence after passing to an Abelian quotient.

\begin{lemma}[{\cite[Exercise~II.5.16 and Appendix A, C5]{Har77}}]\label{rkc1}
Let $E$ be a vector bundle of rank $n$. Then $\operatorname{rk}\left(\bigwedge^rE\right)=\binom nr$ and $c_1\left(\bigwedge^rE\right)=\binom{n-1}{r-1}c_1(E)$.
\end{lemma}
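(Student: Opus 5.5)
The statement to prove is Lemma~\ref{rkc1}: for a rank $n$ bundle $E$, $\operatorname{rk}\bigl(\bigwedge^rE\bigr)=\binom nr$ and $c_1\bigl(\bigwedge^rE\bigr)=\binom{n-1}{r-1}c_1(E)$.

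The rank is a local statement. On a trivializing open set, $E\simeq\mathcal O^{\oplus n}$ with basis $e_1,\dots,e_n$. Then $\bigwedge^rE$ is free with basis the wedges $e_{i_1}\wedge\cdots\wedge e_{i_r}$ with $i_1<\cdots<i_r$. There are $\binom nr$ such index sets, which gives the rank.

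For $c_1$ I will use the splitting principle. There is a flag bundle morphism $f\colon F\to X$ such that $f^\ast\colon A^\ast(X)\to A^\ast(F)$ is injective and $f^\ast E$ has a filtration $0=E_0\subset E_1\subset\cdots\subset E_n=f^\ast E$ whose quotients are line bundles with Chern roots $\alpha_1,\dots,\alpha_n$. Exterior powers commute with pullback. The filtration induces a filtration on $\bigwedge^r f^\ast E$ whose graded pieces are the line bundles $L_{i_1}\otimes\cdots\otimes L_{i_r}$, one for each $r$-subset $\{i_1<\dots<i_r\}$. Since $c_1$ is additive on exact sequences,
$$
c_1\Bigl(\bigwedge^r f^\ast E\Bigr)=\sum_{|I|=r}\sum_{i\in I}\alpha_i .
$$

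It remains to count. Each fixed index $i$ lies in exactly $\binom{n-1}{r-1}$ subsets $I$ of size $r$, so the sum equals $\binom{n-1}{r-1}\sum_i\alpha_i=\binom{n-1}{r-1}f^\ast c_1(E)$. Injectivity of $f^\ast$ then gives the formula on $X$.

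The only point needing care is the filtration of $\bigwedge^r$ induced by a filtration of $E$ together with its graded pieces. I would verify it locally by induction on $n$, using the exact sequence
$$
0\to \bigwedge^{r-1}E'\otimes L\to\bigwedge^rE\to\bigwedge^r(E/L')\to 0 .
$$
A shortcut avoids this altogether. One has $\det\bigwedge^rE\simeq(\det E)^{\otimes\binom{n-1}{r-1}}$, which can be checked on transition functions: by the same counting argument, the determinant of $\bigwedge^rg$ for $g\in GL_n$ is $(\det g)^{\binom{n-1}{r-1}}$.
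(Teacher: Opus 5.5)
Your argument is correct and is the standard one the paper relies on by citation (it gives no proof of its own): the rank comes from the local basis of $r$-fold wedges, and $c_1$ comes from the splitting principle together with the filtration of exterior powers along a flag, which is exactly Lemma~\ref{lem:exterior-filtration} and which you could apply inductively instead of re-deriving it. The one slip is notational: in your displayed exact sequence, $E'$ and $E/L'$ should both denote $E/L$ for a line subbundle $L\subset E$.
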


\begin{lemma}[{\cite[Exercise~II.5.16]{Har77}}]\label{lem:exterior-filtration}
Let
$$
0\longrightarrow A\longrightarrow B\longrightarrow C\longrightarrow 0
$$
be a short exact sequence of vector bundles. Then $\bigwedge^rB$ has a natural filtration
$$
0=F_{r+1}\subset F_r\subset\cdots\subset F_0=\bigwedge^rB
$$
such that
$$
F_i/F_{i+1}\simeq \bigwedge^iA\otimes \bigwedge^{r-i}C
$$
for every $i$.
\end{lemma}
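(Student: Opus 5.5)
The statement is the standard filtration of an exterior power attached to a short exact sequence, so I will build the filtration directly from the inclusion $A\subset B$ and then identify the graded pieces. Define $F_i\subset\bigwedge^rB$ to be the image of the natural map
$$
\bigwedge^iA\otimes\bigwedge^{r-i}B\longrightarrow\bigwedge^rB,\qquad a_1\wedge\cdots\wedge a_i\otimes b_{i+1}\wedge\cdots\wedge b_r\longmapsto a_1\wedge\cdots\wedge a_i\wedge b_{i+1}\wedge\cdots\wedge b_r .
$$
Thus $F_0=\bigwedge^rB$, and $F_{i+1}\subset F_i$ because $\bigwedge^{i+1}A\otimes\bigwedge^{r-i-1}B$ factors through $\bigwedge^iA\otimes\bigwedge^{r-i}B$ by moving one factor of $A$ into $B$. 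Also $F_{r+1}=0$, since $\bigwedge^{r+1}A$ contributes nothing to $\bigwedge^rB$; equivalently, the inclusion $F_{r+1}\subset F_r\cong\bigwedge^rA$ makes the top step $F_r=\bigwedge^rA\otimes\bigwedge^0C$.

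The main step is the isomorphism $F_i/F_{i+1}\simeq\bigwedge^iA\otimes\bigwedge^{r-i}C$. The composite map $\bigwedge^iA\otimes\bigwedge^{r-i}B\to F_i/F_{i+1}$ kills $\bigwedge^iA\otimes(\text{image of }A\otimes\bigwedge^{r-i-1}B)$, because that image lands in $F_{i+1}$. Since $\bigwedge^{r-i}C$ is the quotient of $\bigwedge^{r-i}B$ by exactly that image, the map descends to a surjection
$$
\varphi_i\colon\bigwedge^iA\otimes\bigwedge^{r-i}C\longrightarrow F_i/F_{i+1}.
$$
To see that $\varphi_i$ is an isomorphism and that all $F_i$ are subbundles, I will work locally, where the sequence splits: $B\simeq A\oplus C$ on an open set $U$. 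On $U$ there is the canonical decomposition
$$
\bigwedge^rB|_U\simeq\bigoplus_j\bigwedge^jA\otimes\bigwedge^{r-j}C,
$$
and under it $F_i|_U=\bigoplus_{j\ge i}\bigwedge^jA\otimes\bigwedge^{r-j}C$. Each $F_i$ is therefore locally a direct summand, hence a subbundle, and $F_i/F_{i+1}$ is locally the $j=i$ summand, with $\varphi_i$ restricting to the identity on it. Because $\varphi_i$ was defined globally and intrinsically, it does not depend on the chosen splitting, so these local isomorphisms glue to a global one. This also shows that the filtration is natural.

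I expect the only delicate point to be checking this independence: the filtration and $\varphi_i$ are canonical, while the splitting is not. Once $\varphi_i$ is defined without reference to a splitting, being an isomorphism is a local property, so the local computation with a basis adapted to $A\subset B$ is enough to finish.
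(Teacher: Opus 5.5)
Your proof is correct. It is the standard solution to Hartshorne's Exercise~II.5.16, which is all the paper invokes here; the paper gives no proof of its own. You take $F_i$ to be the image of $\bigwedge^iA\otimes\bigwedge^{r-i}B$, descend to a canonical surjection $\bigwedge^iA\otimes\bigwedge^{r-i}C\to F_i/F_{i+1}$, and check it is an isomorphism locally where the sequence splits. The one fact you use implicitly is that $\ker\bigl(\bigwedge^{r-i}B\to\bigwedge^{r-i}C\bigr)$ is the image of $A\otimes\bigwedge^{r-i-1}B$. It follows from the same local splitting, so there is no gap.
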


The main structural input is the following theorem of the second author.

\begin{theorem}[{\cite[Theorem~1.3]{Wat23}}]\label{thm:wat23}
Let $X$ be a smooth projective variety of dimension $n$. Assume that $\bigwedge^r T_X$ is nef for some $1\leq r<n$. Then, after taking a suitable finite \'{e}tale cover $\widetilde X \longrightarrow X$, there exists a locally trivial fibration $\varphi\colon \widetilde X \longrightarrow A$ over an Abelian variety $A$ whose fibers are Fano varieties.
\end{theorem}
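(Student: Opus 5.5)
The plan is to reduce to the structure theory of varieties with nef anticanonical bundle and then use the extra positivity of $\bigwedge^rT_X$ to rule out the non-Fano building blocks. First I show that $-K_X$ is nef. By Lemma~\ref{rkc1}, $\det\bigl(\bigwedge^rT_X\bigr)=\mathcal O_X\bigl(-\binom{n-1}{r-1}K_X\bigr)$. The determinant of a nef bundle is nef, so $-K_X$ is nef.

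Next I invoke the structure theorem of Cao and Cao--H\"oring for projective manifolds with nef anticanonical class. After a finite \'etale cover $\widetilde X\to X$, the Albanese map $\varphi\colon\widetilde X\to A$ is a locally trivial fibration. Its fiber $F$ decomposes, up to a further \'etale cover, as a product $Y\times Z$. Here $Y$ is a product of Calabi--Yau and irreducible holomorphic symplectic manifolds, and $Z$ is rationally connected with $-K_Z$ nef. Nefness is preserved under pullback by finite \'etale covers. Also $T_{\widetilde X}|_F\twoheadrightarrow T_F$ and $T_F=T_Y\boxplus T_Z$. So $\bigwedge^rT_F$ is a nef quotient of $\bigwedge^rT_{\widetilde X}|_F$. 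By Lemma~\ref{lem:exterior-filtration} applied to the product splitting, it contains the direct summand $\bigwedge^{j}T_Y\otimes\bigwedge^{r-j}T_Z$ for each admissible $j$.

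The goal of the middle step is to kill the factor $Y$. Suppose $\dim Y=m\geq 1$ and restrict to a slice $Y\times\{z\}$. I would pick $j=\min(r,m)$ and take a trivial quotient line of $\bigwedge^{r-j}T_Z$ at a point, or use $\bigwedge^0$ when $r\le m$. This should exhibit $\bigwedge^{j}T_Y$ as a nef bundle on $Y$. Its determinant is a multiple of $-K_Y$, which is trivial, so $\bigwedge^jT_Y$ is numerically flat. If $j<m$, all its Chern classes vanish, and in particular $c_2(Y)\cdot H^{m-2}=0$. By Yau/Miyaoka this forces $Y$ to be an \'etale quotient of a torus, which would have been absorbed into $A$; this is a contradiction. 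If $j=m$, i.e.\ $r\ge m$, then in the case $Y\neq\{\mathrm{pt}\}$ I instead use $\bigwedge^{m-1}T_Y\otimes\bigwedge^{r-m+1}T_Z$, or dualize via $\bigwedge^{m-1}T_Y\simeq\Omega^1_Y$. Again this produces a nef bundle with trivial determinant and a nonvanishing $c_2$, giving a contradiction. Hence $F=Z$ is rationally connected.

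Finally I must show that $Z$ is Fano, which means upgrading nefness of $-K_Z$ to ampleness. For every irreducible curve $C\subset Z$, write $T_Z|_{\widetilde C}=\bigoplus\mathcal O(a_i)$ with $a_1\le\dots\le a_d$ on the normalization. Nefness of $\bigwedge^rT_Z|_{\widetilde C}$ gives $a_1+\dots+a_r\ge0$. Combined with the ordering, this yields $-K_Z\cdot C=\sum a_i\ge \frac{d}{r}(a_1+\dots+a_r)\ge0$. Moreover, equality forces all $a_i=0$, so $T_Z|_C$ is trivial. I expect this to be the main obstacle. I would try to contradict it by deforming $C$ and using a very free rational curve meeting $C$. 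Alternatively, I would argue that $-K_Z$ is nef and big (since $Z$ is rationally connected with $\bigwedge^rT_Z$ nef), take the contraction given by basepoint-free $-mK_Z$, and show that no curve has $T_Z|_C$ trivial by bend-and-break on a covering family. Then Kleiman's criterion gives ampleness of $-K_Z$.
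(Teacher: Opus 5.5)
The paper gives no proof of this statement; it is quoted from \cite[Theorem~1.3]{Wat23}, so your proposal has to stand on its own. The first half can be made to work, but needs two repairs. That first half consists of nefness of $-K_X$ from $\det\bigwedge^rT_X$, the Cao/Cao--H\"oring structure, and killing $Y$ via numerical flatness, using $c_2(\bigwedge^iT_Y)=\binom{m-2}{i-1}c_2(T_Y)$ when $c_1(T_Y)=0$. The first repair: $T_F$ is a subbundle of $T_{\widetilde X}|_F$, not a quotient in general. It is a quotient here only because local triviality of $\varphi$ splits $0\to T_F\to T_{\widetilde X}|_F\to\cO_F^{\oplus q}\to0$, where $q:=\dim A$. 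The second repair: you only use $\bigwedge^rT_F$. When $r\ge\dim F$ (which can happen if $q>0$), this bundle is $0$ or $\cO_F(-K_F)$ and says nothing about $Y$. You must instead use all summands $\bigwedge^{r-k}T_F\otimes\bigwedge^k\cO_F^{\oplus q}$ of $\bigwedge^r(T_F\oplus\cO_F^{\oplus q})$. Since $r<n$, these give nefness of $\bigwedge^jT_F$ for some $1\le j<\dim F$ whenever $\dim F\ge2$.

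The genuine gap is the last step: showing that the rationally connected $Z$, with $\bigwedge^jT_Z$ nef for some $1\le j<d:=\dim Z$, is Fano. This is where the content of the theorem lies, and your curve-by-curve argument does not give it. The splitting $T_Z|_{\widetilde C}\simeq\bigoplus\cO(a_i)$ exists only when $\widetilde C\simeq\P^1$. For higher genus, Harder--Narasimhan slopes still give $-K_Z\cdot C\ge0$, but equality only forces $T_Z|_{\widetilde C}$ to be semistable of degree $0$, not trivial. More seriously, even $-K_Z\cdot C>0$ for every curve would not let you invoke Kleiman's criterion, which needs positivity on $\overline{\mathrm{NE}}(Z)\setminus\{0\}$; ``strictly nef anticanonical implies Fano'' is open in general. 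Your alternative asserts that $-K_Z$ is big ``since $Z$ is rationally connected with $\bigwedge^rT_Z$ nef''. That is exactly what has to be proved: rational connectedness plus nefness of $-K_Z$ does not give bigness (e.g.\ $\P^2$ blown up at the nine base points of a cubic pencil). Once bigness is known, the end is short and needs no bend-and-break on covering families. $-mK_Z$ is base point free and defines a birational morphism, and a positive-dimensional fiber contains a rational curve $f\colon\P^1\to Z$ with $-K_Z\cdot f_*[\P^1]=0$. On the other hand, write $f^*T_Z\simeq\bigoplus\cO(a_i)$ with $a_1\le\cdots\le a_d$. Nefness gives $a_1+\cdots+a_j\ge0$, hence $a_j\ge0$, and the nonzero map $\cO(2)\simeq T_{\P^1}\to f^*T_Z$ forces $a_d\ge2$. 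Since $j<d$, this gives $-K_Z\cdot f_*[\P^1]\ge2$, a contradiction. What is missing is therefore an actual proof that $(-K_Z)^d>0$.
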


We shall also use the following elementary consequence of Hodge decomposition. It is included to make explicit the source of the cohomological obstructions in Sections~4 and~5.

\begin{lemma}\label{lem:hodge-nonvanishing}
Let $X$ be a smooth projective variety of dimension $n$, and let $H$ be an ample divisor on $X$. Put $h:=c_1(\mathcal O_X(H))$. For every $0\leq p\leq n$, the class $h^p$ gives a nonzero element of $H^p(X,\Omega_X^p)$. In particular, $H^p(X,\Omega_X^p)\neq 0$.
\end{lemma}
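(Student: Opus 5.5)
The plan is to realize $h^p$ as a Dolbeault class and detect its nonvanishing by integration. Since $H$ is ample, Kodaira's embedding theorem (or, directly, the very ampleness used elsewhere in the paper) gives a hermitian metric on $\mathcal O_X(H)$ whose curvature form $\omega$ is a Kähler form. Its class $h=c_1(\mathcal O_X(H))$ is represented in $H^1(X,\Omega_X^1)$ via the Dolbeault isomorphism $H^{1,1}_{\bar\partial}(X)\simeq H^1(X,\Omega_X^1)$. Equivalently, we may take the image of $[\mathcal O_X(H)]$ under the map $d\log\colon H^1(X,\mathcal O_X^\ast)\to H^1(X,\Omega_X^1)$. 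The cup product
$$
H^1(X,\Omega_X^1)^{\otimes p}\longrightarrow H^p(X,\Omega_X^p)
$$
corresponds, under Dolbeault, to the wedge product of forms. Hence $h^p$ is represented by the closed $(p,p)$-form $\omega^p$.

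Next we show that this class is nonzero. Suppose $h^p=0$ in $H^p(X,\Omega_X^p)\simeq H^{p,p}_{\bar\partial}(X)$. By the Hodge decomposition, $H^{p,p}_{\bar\partial}(X)$ injects into $H^{2p}(X,\mathbb C)$; concretely, the $\partial\bar\partial$-lemma shows that a $d$-closed $(p,p)$-form which is $\bar\partial$-exact is $d$-exact. So $\omega^p$ would be $d$-exact, and then $\omega^n=\omega^p\wedge\omega^{n-p}$ would also be $d$-exact. By Stokes' theorem this would give $\int_X\omega^n=0$. But this integral is a positive multiple of $H^n$, which is positive since $H$ is ample and $\omega$ is a Kähler form. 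This is a contradiction.

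An algebraic alternative avoids analysis entirely. The de Rham cycle class map sends $h^p$ to $c_1(\mathcal O_X(H))^p$ in $H^{2p}(X,\mathbb C)$, and its Hodge $(p,p)$-component is exactly the Dolbeault class above. Since $h^n\cdot[X]=H^n>0$, we get $h^p\cdot h^{n-p}\neq0$, and therefore $h^p\neq0$. The case $p=0$ is trivial, because $h^0=1\in H^0(X,\mathcal O_X)$.

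The only point needing care is the compatibility between the sheaf-theoretic class in $H^1(X,\Omega^1_X)$ and the de Rham/Chern class, together with the compatibility of cup products. Both are standard; see \cite[Appendix A, C5]{Har77} or the Hodge theory references. I expect this to be the main, though minor, obstacle. The final statement, $H^p(X,\Omega_X^p)\neq0$, follows immediately.
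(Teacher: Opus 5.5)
Your proposal is correct and is essentially the paper's argument: you realize $h^p$ as a $(p,p)$ Dolbeault class and derive a contradiction from $h^n=h^{n-p}\cdot h^p$ together with $\int_X h^n=H^n>0$, where your $\partial\bar\partial$-lemma and Stokes step simply spells out what the paper attributes to the Hodge decomposition. One small caveat: as phrased, your ``algebraic alternative'' does not actually avoid Hodge theory, since nonvanishing in $H^{2p}(X,\mathbb C)$ passes to $H^p(X,\Omega_X^p)$ only via the Hodge decomposition; it would be cleaner to take the cup product directly in Hodge cohomology, $H^p(X,\Omega_X^p)\otimes H^{n-p}(X,\Omega_X^{n-p})\to H^n(X,\Omega_X^n)\simeq\mathbb C$, though your main argument does not need this alternative.
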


\begin{proof}
The case $p=0$ is clear. For $p>0$, the class $h$ is of type $(1,1)$; hence, by Dolbeault's theorem and the Hodge decomposition for compact K\"ahler manifolds \cite[Corollary~2.6.21 and Section~3.2]{Huy05}, the class $h^p$ may be regarded as an element of $H^p(X,\Omega_X^p)$. If $h^p=0$, then $h^n=h^{n-p}h^p=0$, contradicting
$$
\int_X h^n=H^n>0.
$$
Thus $h^p$ is nonzero.
\end{proof}

Finally we record the ambient version needed for restricted cotangent bundles.

\begin{lemma}\label{lem:ambient-hodge-nonvanishing}
Let $X$ be a smooth projective variety of dimension $n$, let $H$ be a very ample divisor on $X$, and let $\iota\colon X\hookrightarrow \mathbb P^N$ be the embedding defined by $H$. Then, for every $1\leq p\leq n$, one has
$$
H^p\left(X,\Omega_{\mathbb P^N}^p|_X\right)\neq 0.
$$
\end{lemma}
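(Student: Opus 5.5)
Write $\iota^\ast$ for restriction along the embedding $\iota\colon X\hookrightarrow\mathbb P^N$ defined by $H$, and $h=c_1(\mathcal O_X(H))$. The plan is to mimic the proof of Lemma~\ref{lem:hodge-nonvanishing}, realizing $h^p$ as the image of a class living in $H^p(X,\Omega_{\mathbb P^N}^p|_X)$.

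The natural map of differentials $\iota^\ast\Omega_{\mathbb P^N}^1\to\Omega_X^1$ is surjective because $\iota$ is a closed immersion of smooth varieties. Taking exterior powers gives a surjection $\Omega_{\mathbb P^N}^p|_X\to\Omega_X^p$, and hence a map on cohomology
$$
\alpha\colon H^p\left(X,\Omega_{\mathbb P^N}^p|_X\right)\longrightarrow H^p\left(X,\Omega_X^p\right).
$$
On the other hand, pullback of differential forms factors as $\Omega_{\mathbb P^N}^p\to\iota_\ast\iota^\ast\Omega_{\mathbb P^N}^p\to\iota_\ast\Omega_X^p$. So the pullback map on Hodge cohomology
$$
H^p(\mathbb P^N,\Omega_{\mathbb P^N}^p)\to H^p(X,\Omega_X^p)
$$
factors through $\alpha$, via the map $\beta\colon H^p(\mathbb P^N,\Omega^p_{\mathbb P^N})\to H^p(X,\Omega^p_{\mathbb P^N}|_X)$ induced by $\Omega^p_{\mathbb P^N}\to\iota_\ast\iota^\ast\Omega^p_{\mathbb P^N}$ together with $H^p(\mathbb P^N,\iota_\ast\mathcal F)=H^p(X,\mathcal F)$, which holds since $\iota$ is affine.

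Let $\eta=c_1(\mathcal O_{\mathbb P^N}(1))\in H^1(\mathbb P^N,\Omega^1_{\mathbb P^N})$. Then $\eta^p\in H^p(\mathbb P^N,\Omega^p_{\mathbb P^N})$, and its pullback to $X$ is $h^p$, because pullback is compatible with cup products and with first Chern classes, and $\iota^\ast\mathcal O(1)=\mathcal O_X(H)$. By Lemma~\ref{lem:hodge-nonvanishing}, $h^p\neq0$ for $1\le p\le n$. Hence $\alpha(\beta(\eta^p))=h^p\neq0$, so $\beta(\eta^p)$ is a nonzero element of $H^p(X,\Omega^p_{\mathbb P^N}|_X)$, which proves the claim.

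The main point needing care is the functoriality: that the Dolbeault/Hodge pullback of $\eta^p$ agrees with the composite $\alpha\circ\beta$ of sheaf-cohomology maps, and that this composite sends $\eta^p$ to $h^p$. I would handle this by working in Čech cohomology with the $\operatorname{dlog}$ description of $c_1$. The class of $\mathcal O(1)$ is $\{d\log g_{ij}\}$; restricting to $X$ and then projecting to $\Omega_X^1$ gives $\{d\log(g_{ij}|_X)\}$, which represents $h$. Cup products are compatible with both sheaf maps because they are maps of exterior algebras. The remaining steps are formal.
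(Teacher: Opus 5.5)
Your proposal is correct and follows the paper's proof essentially verbatim: you pull back $c_1(\mathcal O_{\mathbb P^N}(1))^p$ to a class in $H^p(X,\Omega^p_{\mathbb P^N}|_X)$, and its image under the surjection $\Omega^p_{\mathbb P^N}|_X\to\Omega^p_X$ is $h^p$, which is nonzero by Lemma~\ref{lem:hodge-nonvanishing}. Your \v{C}ech/$d\log$ discussion only spells out the functoriality (the factorization of the pullback through $\Omega^p_{\mathbb P^N}|_X$, and compatibility with cup products and first Chern classes) that the paper uses without comment.
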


\begin{proof}
Let $h_{\mathbb P^N}:=c_1(\mathcal O_{\mathbb P^N}(1))$ and $h:=\iota^\ast h_{\mathbb P^N}$. The class $h_{\mathbb P^N}^p$ defines an element of $H^p(\mathbb P^N,\Omega_{\mathbb P^N}^p)$, and its pullback gives a class
$$
\alpha_p\in H^p\left(X,\Omega_{\mathbb P^N}^p|_X\right).
$$
Under the natural map $\Omega_{\mathbb P^N}^p|_X\longrightarrow \Omega_X^p$, the class $\alpha_p$ maps to $h^p\in H^p(X,\Omega_X^p)$. This class is nonzero by Lemma~\ref{lem:hodge-nonvanishing}; hence $\alpha_p\neq0$.
\end{proof}

\section{Numerical and cohomological restrictions}
In this section we extract the two main consequences of the Ulrich condition for exterior powers of the tangent bundle. We first keep a line bundle twist in the notation, because the same calculation will be used later to exclude several twisted bundles. Let $M$ be a line bundle on $X$ and, for $0<r<n$, set
$$
E_{r,M}:=\bigwedge^rT_X\otimes M.
$$
When $M=\mathcal O_X$, we write $E_r:=\bigwedge^rT_X$.

\begin{proposition}\label{prop1}
Let $X$ be a smooth projective variety of dimension $n\geq2$, let $H$ be a very ample divisor on $X$, let $M$ be a line bundle on $X$, and let $0<r<n$. Set $E_{r,M}:=\bigwedge^rT_X\otimes M$. If $E_{r,M}$ is $H$-Ulrich, then
$$
 (-K_X)\cdot H^{n-1}=\frac{n}{n+2r}\bigl((n+1)H-2c_1(M)\bigr)\cdot H^{n-1}.
$$
Moreover, $\mu_H(E_{r,M})=\frac{r(n+1)H^n+nc_1(M)\cdot H^{n-1}}{n+2r}$. In particular, for $M=\cO_X$ and $0<r<n$, $(-K_X)\cdot H^{n-1}=\frac{n(n+1)}{n+2r}H^n$, $\mu_H(\bigwedge^r T_X)=\frac{r(n+1)}{n+2r}H^n$.
\end{proposition}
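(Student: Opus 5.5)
The plan is to compute $c_1(E_{r,M})\cdot H^{n-1}$ in two ways and compare: once from Chern classes of exterior powers, once from the Ulrich condition. No positivity or semistability is needed; this is pure linear algebra in intersection numbers.

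\textbf{Chern-class side.} By Lemma~\ref{rkc1} applied to $T_X$ (rank $n$, $c_1(T_X)=-K_X$), we have $\rk(E_{r,M})=\binom nr$. Twisting by the line bundle $M$ adds $\rk\cdot c_1(M)$ to $c_1$, so
$$
c_1(E_{r,M})=\binom{n-1}{r-1}(-K_X)+\binom nr c_1(M).
$$

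\textbf{Ulrich side.} Proposition~\ref{prop:c1}(3) gives
$$
c_1(E_{r,M})\cdot H^{n-1}=\frac12\binom nr\,(K_X+(n+1)H)\cdot H^{n-1}.
$$

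\textbf{Solving.} Intersect the Chern-class expression with $H^{n-1}$, equate it with the Ulrich expression, and divide by $\binom nr$ using $\binom{n-1}{r-1}/\binom nr=r/n$. Writing $k:=(-K_X)\cdot H^{n-1}$ and $m:=c_1(M)\cdot H^{n-1}$, this gives
$$
\frac rn\,k+m=\frac12\bigl(-k+(n+1)H^n\bigr).
$$
Multiplying by $2n$ yields $(2r+n)k=n\bigl((n+1)H^n-2m\bigr)$. This is the claimed formula for $(-K_X)\cdot H^{n-1}$; note that $n+2r>0$.

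\textbf{Slope.} We have $\mu_H(E_{r,M})=\frac rn k+m$. Substituting the value of $k$ gives
$$
\frac{r\bigl((n+1)H^n-2m\bigr)}{n+2r}+m=\frac{r(n+1)H^n+nm}{n+2r}.
$$
Setting $M=\cO_X$, so $m=0$, gives the two special cases stated in the proposition.

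There is no real obstacle here. The only points needing care are the binomial identity $\binom{n-1}{r-1}/\binom nr=r/n$ and keeping the sign of $K_X$ correct.
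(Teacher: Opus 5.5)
Your proposal is correct and follows essentially the same route as the paper. You compute $c_1(E_{r,M})$ via Lemma~\ref{rkc1}, equate it with the Ulrich formula of Proposition~\ref{prop:c1}(3), divide by $\binom{n}{r}$ using $\binom{n-1}{r-1}/\binom{n}{r}=r/n$, and then substitute into $\mu_H(E_{r,M})=\frac{r}{n}(-K_X)\cdot H^{n-1}+c_1(M)\cdot H^{n-1}$; your algebra checks out.
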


\begin{proof}
By Lemma~\ref{rkc1}, we have
$$
 \rk\left(\bigwedge^r T_X\right)=\binom{n}{r},
 \qquad
 c_1\left(\bigwedge^r T_X\right)=\binom{n-1}{r-1}(-K_X).
$$
Hence $\rk(E_{r,M})=\binom{n}{r}$ and $c_1(E_{r,M})=\binom{n-1}{r-1}(-K_X)+\binom{n}{r}c_1(M)$. Since $E_{r,M}$ is $H$-Ulrich, we obtain
$$
c_1(E_{r,M})\cdot H^{n-1}=\frac{\rk(E_{r,M})}{2}\left(K_X+(n+1)H\right)\cdot H^{n-1}
$$
by Proposition~\ref{prop:c1}. Then 
$$
\left\{\binom{n-1}{r-1}(-K_X)+\binom{n}{r}c_1(M)\right\}\cdot H^{n-1}=\frac{1}{2}\binom{n}{r}\left(K_X+(n+1)H\right)\cdot H^{n-1}.
$$
Dividing by $\binom{n}{r}$ and using $\frac{\binom{n-1}{r-1}}{\binom{n}{r}}=\frac{r}{n}$ we obtain
$$
\left(\frac{r}{n}+\frac{1}{2}\right)(-K_X)\cdot H^{n-1}=\frac{n+1}{2}H^n-c_1(M)\cdot H^{n-1}.
$$
Therefore
$$
 (-K_X)\cdot H^{n-1}=\frac{n}{n+2r}\left((n+1)H-2c_1(M)\right)\cdot H^{n-1}.
$$

It remains to compute the slope.  From the formula for $c_1(E_{r,M})$, we have
$$
\mu_H(E_{r,M})=\frac{c_1(E_{r,M})\cdot H^{n-1}}{\rk(E_{r,M})}=\frac{r}{n}(-K_X)\cdot H^{n-1}+c_1(M)\cdot H^{n-1}.
$$
Using the formula for $(-K_X)\cdot H^{n-1}$ obtained above, we obtain
$$
\begin{aligned}
\mu_H(E_{r,M})&=\frac{r}{n}\cdot\frac{n}{n+2r}\left((n+1)H-2c_1(M)\right)\cdot H^{n-1}+c_1(M)\cdot H^{n-1} \\
&=\frac{r(n+1)H^n-2r\,c_1(M)\cdot H^{n-1}}{n+2r}+c_1(M)\cdot H^{n-1} \\
&=\frac{r(n+1)H^n+nc_1(M)\cdot H^{n-1}}{n+2r}.
\end{aligned}
$$
In particular, if $M=\cO_X$, then $c_1(M)=0$.  Hence $(-K_X)\cdot H^{n-1}=\frac{n(n+1)}{n+2r}H^n$ and $\mu_H\left(\bigwedge^r T_X\right)=\frac{r(n+1)}{n+2r}H^n$.
\end{proof}

We now specialize to the untwisted bundle. The numerical identity gives positive anticanonical degree, while the nefness and $\mu_H$-semistability of the Ulrich bundle eliminate the possible Abelian factor in a structure theorem of varieties with nef exterior powers of the tangent bundles.

\begin{theorem}\label{thm2}
Let $X$ be a smooth projective variety of dimension $n\geq2$, and let $H$ be a very ample divisor on $X$. If $\bigwedge^r T_X$ is $H$-Ulrich for some $0<r<n$, then $X$ is Fano and $(-K_X)\cdot H^{n-1}=\frac{n(n+1)}{n+2r}H^n$.
\end{theorem}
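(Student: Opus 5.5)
The numerical identity is already Proposition~\ref{prop1} with $M=\cO_X$, so the whole task is to prove that $X$ is Fano. We use the Ulrich condition twice: for positivity and for slope semistability.

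\textbf{Reduction to a Fano fibration over an Abelian variety.} By Proposition~\ref{prop:c1}(1), $\bigwedge^rT_X$ is nef. Theorem~\ref{thm:wat23} then gives a finite étale cover $f\colon\widetilde X\to X$ and a locally trivial fibration $\varphi\colon\widetilde X\to A$ onto an Abelian variety with Fano fibers $F$. Set $\widetilde H:=f^\ast H$, which is ample. By Lemma~\ref{lem:pull}, $f^\ast\bigwedge^rT_X=\bigwedge^rT_{\widetilde X}$ is $\mu_{\widetilde H}$-semistable. Since $f$ is étale, $\widetilde H^n=\deg f\cdot H^n$ and $(-K_{\widetilde X})\cdot\widetilde H^{n-1}=\deg f\cdot(-K_X)\cdot H^{n-1}$. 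Hence Proposition~\ref{prop1} gives
$$
\mu_{\widetilde H}\Bigl(\bigwedge^rT_{\widetilde X}\Bigr)=\frac rn(-K_{\widetilde X})\cdot\widetilde H^{n-1}.
$$

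\textbf{Excluding a positive-dimensional base.} Suppose $d:=\dim A>0$, and let $m:=n-d$ be the fiber dimension. Since $\varphi$ is smooth, there is an exact sequence
$$
0\to T_{\widetilde X/A}\to T_{\widetilde X}\to\varphi^\ast T_A\to0,
\qquad \varphi^\ast T_A\simeq\cO_{\widetilde X}^{\oplus d}.
$$
Apply Lemma~\ref{lem:exterior-filtration}. The smallest index $i$ with $F_i/F_{i+1}\neq0$ is $i_0=\max(0,r-d)$, so $F_{i_0}=\bigwedge^rT_{\widetilde X}$ and we get a surjection
$$
\bigwedge^rT_{\widetilde X}\twoheadrightarrow \bigwedge^{i_0}T_{\widetilde X/A}\otimes\bigwedge^{r-i_0}\cO^{\oplus d}.
$$
The target is a nonzero vector bundle, hence torsion-free.

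If $r\le d$, then $i_0=0$ and the quotient is trivial, of slope $0$. Proposition~\ref{prop:quot} gives $\frac rn(-K_{\widetilde X})\cdot\widetilde H^{n-1}\le0$. This contradicts the strict positivity $(-K_X)\cdot H^{n-1}=\frac{n(n+1)}{n+2r}H^n>0$.

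If $r>d$, then $i_0=r-d$, and the quotient is $\bigwedge^{r-d}T_{\widetilde X/A}$, of slope
$$
\frac{r-d}{m}\,c_1(T_{\widetilde X/A})\cdot\widetilde H^{n-1}.
$$
Since $T_A$ is trivial, $c_1(T_{\widetilde X/A})=-K_{\widetilde X}$. Proposition~\ref{prop:quot} then yields
$$
\frac rn\,(-K_{\widetilde X})\cdot\widetilde H^{n-1}\le\frac{r-d}{n-d}\,(-K_{\widetilde X})\cdot\widetilde H^{n-1}.
$$
Since $d>0$ and $r<n$, we have $\frac{r-d}{n-d}<\frac rn$. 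Because $(-K_{\widetilde X})\cdot\widetilde H^{n-1}>0$, this is again a contradiction. Hence $d=0$.

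\textbf{Concluding Fano.} With $d=0$, $\widetilde X$ itself is Fano. A Fano manifold is simply connected, but we need the conclusion for $X$, not only for its cover. Since $f$ is finite étale, $-K_{\widetilde X}=f^\ast(-K_X)$. The pullback of a line bundle by a finite surjective morphism is ample if and only if the line bundle is ample, so $-K_X$ is ample and $X$ is Fano.

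The main obstacle is the middle step. One has to pick the right quotient of the exterior power via the filtration, handling the two cases $r\le d$ and $r>d$, and use the identification $c_1(T_{\widetilde X/A})=-K_{\widetilde X}$. The key inequality is then $\frac{r-d}{n-d}<\frac rn$, combined with the strict positivity of $(-K_X)\cdot H^{n-1}$ coming from Proposition~\ref{prop1}.
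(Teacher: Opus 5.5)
Your proposal is correct and follows the paper's proof essentially step for step: nefness plus Theorem~\ref{thm:wat23}, semistability of $\bigwedge^rT_{\widetilde X}$ via Lemma~\ref{lem:pull}, the case split $r\le d$ versus $r>d$ using the first nonzero graded piece of the filtration from the relative tangent sequence, the inequality $\frac{r-d}{n-d}<\frac rn$, and descent of ampleness of $-K_X$ along the finite \'etale cover. Your aside about simple connectedness of Fano manifolds is not needed for the argument, but it does no harm.
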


\begin{proof}
By Proposition~\ref{prop1}, $(-K_X)\cdot H^{n-1}=\frac{n(n+1)}{n+2r}H^n$. It remains to prove that $X$ is Fano. Set $E_r:=\bigwedge^rT_X$. Since $E_r$ is $H$-Ulrich, it is globally generated and $\mu_H$-semistable by Proposition~\ref{prop:c1}. In particular, $E_r$ is nef.  

By Theorem~\ref{thm:wat23}, there exists a finite \'{e}tale cover
$f\colon X'\to X$ and a locally trivial fibration $\alpha\colon X'\longrightarrow A$ onto an Abelian variety $A$ whose fibers are Fano varieties.  Set $q:=\dim A$.

Since $f$ is finite and \'{e}tale, we have $f^\ast E_r\simeq \bigwedge^r T_{X'}$ and $K_{X'}=f^\ast K_X$.  Moreover, finite pullback preserves semistability by Lemma~\ref{lem:pull}; hence $f^\ast E_r$ is
$\mu_{f^\ast H}$-semistable.  We have
$$
\begin{aligned}
\mu_{f^\ast H}(f^\ast E_r)&=\frac{c_1(f^\ast E_r)\cdot (f^\ast H)^{n-1}}{\rk(E_r)}  \\
&=(\deg f)\frac{c_1(E_r)\cdot H^{n-1}}{\rk(E_r)}=(\deg f)\mu_H(E_r)>0.
\end{aligned}
$$

Assume that $q>0$.  Since $A$ is an Abelian variety, we have $T_A\simeq \cO_A^{\oplus q}$.  Thus the relative tangent sequence of $\alpha$ is
$$
0\longrightarrow T_{X'/A}\longrightarrow T_{X'}\longrightarrow \alpha^\ast T_A\longrightarrow 0, \qquad\alpha^\ast T_A\simeq \cO_{X'}^{\oplus q}.
$$
By Lemma~\ref{lem:exterior-filtration}, this sequence induces a filtration
$$
0=F_{r+1}\subset F_r\subset \cdots \subset F_{0}=\bigwedge^r T_{X'}
$$
such that
$$
F_i/F_{i+1}\simeq\bigwedge^{i}T_{X'/A}\otimes\bigwedge^{r-i} \cO_{X'}^{\oplus q}.
$$

If $q\geq r$, then $F_0/F_{1}\simeq\bigwedge^r\cO_{X'}^{\oplus q}\simeq \cO_{X'}^{\oplus \binom{q}{r}}$ is a nonzero quotient of $\bigwedge^r T_{X'}$ of slope $0$.  This contradicts the $\mu_{f^\ast H}$-semistability of $\bigwedge^rT_{X'}$. Indeed 
$$
0<\mu_{f^\ast H}\left(\bigwedge^rT_{X'}\right)\leq\mu_{f^\ast H}\left(\bigwedge^r\cO_{X'}^{\oplus q}\right)=0.
$$

Hence we may assume that $0<q<r$. For every $i<r-q$, we have $r-i>q$, and therefore $\bigwedge^{r-i}\mathcal O_{X'}^{\oplus q}=0$. It follows that $F_0=F_1=\cdots=F_{r-q}$. Consequently, $F_{r-q}/F_{r-q+1}\simeq\bigwedge^{r-q}T_{X'/A}$ is a nonzero quotient of $F_0=\bigwedge^rT_{X'}$. Therefore, by semistability,
$$
\mu_{f^\ast H}\left(\bigwedge^rT_{X'}\right)\leq\mu_{f^\ast H}\left(\bigwedge^{r-q}T_{X'/A}\right).
$$
On the other hand, since $K_A\simeq \cO_A$, we have $K_{X'/A}=K_{X'}$.  Hence
$$
\begin{aligned}
\mu_{f^\ast H}\left(\bigwedge^{r-q}T_{X'/A}\right)&=\frac{r-q}{n-q}(-K_{X'/A})\cdot (f^\ast H)^{n-1} \\
&=\frac{r-q}{n-q}(-K_{X'})\cdot (f^\ast H)^{n-1}\\
&<\frac{r}{n}(-K_{X'})\cdot (f^\ast H)^{n-1}.
\end{aligned}
$$
Thus
$$
\mu_{f^\ast H}\left(\bigwedge^{r-q}T_{X'/A}\right)<\frac{r}{n}(-K_{X'})\cdot (f^\ast H)^{n-1}=\mu_{f^\ast H}\left(\bigwedge^rT_{X'}\right),
$$
which is a contradiction.  Therefore $q=0$.

Then the Abelian variety $A$ is a point.  Hence $X'$ is a Fano variety by Theorem~\ref{thm:wat23}.   Thus $-K_{X'}=f^\ast(-K_X)$ is ample.  Since ampleness descends under finite surjective morphisms, $-K_X$ is ample.  Therefore $X$ is Fano.
\end{proof}

The next lemma is only a reformulation of the Ulrich vanishing using the standard identification between exterior powers of the tangent bundle and differential forms.

\begin{lemma}\label{thm3}
Let $X$ be a smooth projective variety of dimension $n\geq2$, let $H$ be a very ample divisor on $X$, let $M$ be a line bundle on $X$, and let $0<r<n$. Set $E_{r,M}:=\bigwedge^rT_X\otimes M$. If $E_{r,M}$ is $H$-Ulrich, then, for any $1\leq i\leq n$ and any $q$,
$$
H^q\left(X, \Omega_X^{n-r}\otimes \mathcal O_X(-K_X)\otimes M(-iH)\right)=0.
$$
\end{lemma}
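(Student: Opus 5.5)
The proof reduces to a single canonical isomorphism $\bigwedge^rT_X\simeq\Omega_X^{n-r}\otimes\mathcal O_X(-K_X)$, after which the statement is just a restatement of the Ulrich vanishing in Definition~\ref{def:ulrich}.

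First we establish the isomorphism. Since $T_X$ is locally free of rank $n$, the wedge product pairing
$$
\bigwedge^rT_X\otimes\bigwedge^{n-r}T_X\longrightarrow\bigwedge^nT_X=\det T_X\simeq\mathcal O_X(-K_X)
$$
is perfect. This can be checked on a local frame $\partial_1,\dots,\partial_n$, where it matches the basis $\partial_I$ with $\pm\partial_{I^c}$. The perfect pairing gives
$$
\bigwedge^rT_X\simeq\left(\bigwedge^{n-r}T_X\right)^\vee\otimes\det T_X.
$$
Since $\left(\bigwedge^{n-r}T_X\right)^\vee\simeq\bigwedge^{n-r}\Omega_X^1=\Omega_X^{n-r}$, we obtain $\bigwedge^rT_X\simeq\Omega_X^{n-r}\otimes\mathcal O_X(-K_X)$.

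Next we tensor this isomorphism with $M(-iH)$:
$$
E_{r,M}(-iH)\simeq\Omega_X^{n-r}\otimes\mathcal O_X(-K_X)\otimes M(-iH).
$$
By the definition of $H$-Ulrich, $H^q(X,E_{r,M}(-iH))=0$ for every $1\leq i\leq n$ and every $q$. Since isomorphic sheaves have isomorphic cohomology, the claimed vanishing follows.

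There is no real obstacle here. The only point that needs care is the identification $\det T_X\simeq\mathcal O_X(-K_X)$ together with the perfectness of the pairing, and both are standard linear algebra applied fibrewise.
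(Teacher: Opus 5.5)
Your proposal is correct and follows essentially the same route as the paper: identify $\bigwedge^rT_X\simeq\Omega_X^{n-r}\otimes\mathcal O_X(-K_X)$ (the paper cites the general isomorphism $\bigwedge^rE\simeq\bigwedge^{n-r}E^\vee\otimes\det E$, which you derive from the perfect wedge pairing), twist by $M(-iH)$, and read off the vanishing from the definition of $H$-Ulrich. Nothing is missing.
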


\begin{proof}
For a vector bundle $E$ of rank $n$, there is a natural isomorphism
$$
\bigwedge^r E\simeq\bigwedge^{n-r}E^{\vee}\otimes \det E.
$$
Applying this to $E=T_X$, we obtain
$$
\bigwedge^r T_X\simeq\bigwedge^{n-r}\Omega_X^1\otimes \det T_X=\Omega_X^{n-r}\otimes \mathcal O_X(-K_X).
$$
Hence, for any $1\leq i\leq n$, we have
$$
\begin{aligned}
E_{r,M}(-iH)&=\left(\bigwedge^r T_X\otimes M\right)(-iH) \\
&\simeq\Omega_X^{n-r}\otimes \mathcal O_X(-K_X)\otimes M(-iH).
\end{aligned}
$$
Since $E_{r,M}$ is $H$-Ulrich, by definition we have
$$
 H^q\left(X,E_{r,M}(-iH)\right)=0
$$
for any $1\leq i\leq n$ and any $q$.  Therefore
$$
H^q\left(X,\Omega_X^{n-r}\otimes \mathcal O_X(-K_X)\otimes M(-iH)\right)=0
$$
for any $1\leq i\leq n$ and any $q$.
\end{proof}

\begin{corollary}\label{cor4}
Let $X$ be a smooth projective variety of dimension $n\geq2$, let $H$ be a very ample divisor on $X$, let $M$ be a line bundle on $X$, and let $0<r<n$. If $M\simeq \mathcal O_X(K_X+jH)$ for some $1\leq j\leq n$, then $\bigwedge^r T_X\otimes M$ is not $H$-Ulrich.
\end{corollary}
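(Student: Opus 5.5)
Suppose, for contradiction, that $\bigwedge^rT_X\otimes M$ is $H$-Ulrich with $M\simeq\mathcal O_X(K_X+jH)$ for some $1\leq j\leq n$. We apply Lemma~\ref{thm3} and simplify the twist. Since $\mathcal O_X(-K_X)\otimes M\simeq\mathcal O_X(jH)$, Lemma~\ref{thm3} gives
$$
H^q\left(X,\Omega_X^{n-r}\otimes\mathcal O_X((j-i)H)\right)=0
\qquad\text{for every }1\leq i\leq n\text{ and every }q.
$$

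The key choice is $i:=j$, which is allowed because $1\leq j\leq n$. This yields
$$
H^q\left(X,\Omega_X^{n-r}\right)=0\qquad\text{for every }q.
$$
Now take $q=n-r$ and $p:=n-r$. Since $0<r<n$, we have $0<p<n$, so Lemma~\ref{lem:hodge-nonvanishing} applies. It gives $H^{p}(X,\Omega_X^{p})\neq0$, because the class $h^{n-r}$ of the very ample divisor $H$ is nonzero there. This contradicts the vanishing above, so $\bigwedge^rT_X\otimes M$ is not $H$-Ulrich.

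There is no real obstacle. The only point needing care is that $i=j$ lies in the Ulrich range $1\leq i\leq n$, and that is exactly the hypothesis $1\leq j\leq n$.
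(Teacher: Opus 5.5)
Your proof is correct and is essentially identical to the paper's. Both specialize the vanishing of Lemma~\ref{thm3} to $i=j$, so the twist cancels and gives $H^q(X,\Omega_X^{n-r})=0$ for all $q$. Both then contradict this with the Hodge non-vanishing $H^{p}(X,\Omega_X^{p})\neq 0$ for $p=n-r$ from Lemma~\ref{lem:hodge-nonvanishing}.
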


\begin{proof}
Put $p:=n-r$.  Suppose that $E_{r,M}:=\bigwedge^rT_X \otimes M$ is
$H$-Ulrich.  By Lemma~\ref{thm3}, applied with $i=j$, we have
$$
H^q\left(X,\Omega_X^p\otimes \cO_X(-K_X)\otimes M(-jH)\right)=0
$$
for every $q$.  Since $M\simeq \cO_X(K_X+jH)$, we obtain $\Omega_X^p\otimes \cO_X(-K_X)\otimes M(-jH)\simeq\Omega_X^p$. Hence $H^q(X,\Omega_X^p)=0$ for every $q$.

By Lemma~\ref{lem:hodge-nonvanishing}, we have $H^p(X,\Omega_X^p)\neq 0$. This contradicts the vanishing obtained above. Hence $E_{r,M}$ is not $H$-Ulrich.
\end{proof}

\section{Twisted exterior powers in the Picard number one case}
We turn to line twists under the additional assumption that the Picard group is generated by the polarization. These results are used both in the proof of the main theorem and to rule out some natural bundles attached to the embedding defined by $H$.

\begin{corollary}\label{cor5}
Let $X$ be a smooth projective variety of dimension $n\geq2$, let $H$ be a very ample divisor on $X$, let $0<r<n$, and let $k\in\Z$. If $\bigwedge^rT_X(kH)$ is $H$-Ulrich, then
$$
(-K_X)\cdot H^{n-1}=\frac{n(n+1-2k)}{n+2r}H^n.
$$
If, in addition, $-K_X=iH$ for some $i\in\Z$, then $i=\frac{n(n+1-2k)}{n+2r}$, and the case $1\leq i+k\leq n$ is impossible.  Consequently, under the assumption $-K_X=iH$,
$$
\bigwedge^rT_X(H)\quad(n\geq2),
\qquad
\bigwedge^rT_X(2H)\quad(n\geq3)
$$
are not $H$-Ulrich.
\end{corollary}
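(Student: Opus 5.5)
The plan is to specialize the results of Section~3 to the twist $M=\cO_X(kH)$. First, Proposition~\ref{prop1} with $c_1(M)=kH$ gives
$$
(-K_X)\cdot H^{n-1}=\frac{n}{n+2r}\bigl((n+1)H-2kH\bigr)\cdot H^{n-1}=\frac{n(n+1-2k)}{n+2r}H^n,
$$
which is the displayed identity. If moreover $-K_X=iH$, then the left-hand side equals $iH^n$. Since $H^n>0$, we can divide by $H^n$ and obtain $i=\frac{n(n+1-2k)}{n+2r}$.

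For the impossibility statement, I would use Corollary~\ref{cor4}. Under $-K_X=iH$, we have $\cO_X(kH)\simeq\cO_X(K_X+(i+k)H)$ as line bundles, because $K_X=-iH$ in $\Pic(X)$. So if $j:=i+k$ satisfies $1\leq j\leq n$, Corollary~\ref{cor4} says $\bigwedge^rT_X(kH)$ is not $H$-Ulrich. The only point to check here is that ``$-K_X=iH$'' means linear equivalence and not just numerical equality, so that the isomorphism of line bundles holds. I would read the hypothesis as an equality in $A^1(X)$, which is what the paper's conventions say.

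For the two consequences, I would combine the formula for $i$ with the range condition. For $k=1$, we get $i=\frac{n(n-1)}{n+2r}$. This is $\geq 0$, and using $r\geq1$ it is $<n-1$, because $n(n-1)<(n-1)(n+2)$. Hence $1\leq i+1\leq n$, so this case is excluded. (Note $i=0$ still gives $i+k=1$, which is fine.)

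For $k=2$ and $n\geq3$, we get $i=\frac{n(n-3)}{n+2r}$. This is $\geq0$, and $i<n-3+1$ since $n-3<n$, so $i\leq n-2$ because $i$ is an integer. Then $2\leq i+2\leq n$, again inside the excluded range. The main (minor) obstacle is just verifying these inequalities carefully, including the integrality of $i$, which holds by hypothesis.
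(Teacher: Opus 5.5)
Your proposal is correct and follows the paper's proof essentially step for step. You apply Proposition~\ref{prop1} with $M=\cO_X(kH)$, then Corollary~\ref{cor4} with $j=i+k$ via $\cO_X(kH)\simeq\cO_X(K_X+(i+k)H)$, and finish with elementary bounds on $i$ for $k=1,2$.

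There are only two small differences:
\begin{enumerate}
\item For $k=1$, your integer bound $0\leq i\leq n-2$ also covers $n=2$, which the paper instead treats separately by noting $i=\frac12\notin\mathbb Z$.
\item For $k=2$, the bound $i<n-2$ should be justified as $i=(n-3)\cdot\frac{n}{n+2r}\leq n-3$, not ``since $n-3<n$''.
\end{enumerate}
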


\begin{proof}
Apply Proposition~\ref{prop1} to $M=\mathcal O_X(kH)$. Since $c_1(M)=kH$, we obtain
$$
\begin{aligned}
(-K_X)\cdot H^{n-1}&=\frac{n(n+1-2k)}{n+2r}H^n.
\end{aligned}
$$
This proves the first assertion.

Assume moreover that $-K_X=iH$ for some $i\in \mathbb Z$.  Then $iH^n=(-K_X)\cdot H^{n-1}=\frac{n(n+1-2k)}{n+2r}H^n$. Since $H$ is ample, $H^n>0$.  Hence
$$
i=\frac{n(n+1-2k)}{n+2r}.
$$

We next show that the case $1\leq i+k\leq n$ is impossible.  Suppose that $1\leq i+k\leq n$. Since $-K_X=iH$, we have $\mathcal O_X(kH)\simeq\mathcal O_X(K_X+(i+k)H)$. By Corollary~\ref{cor4}, this contradicts the assumption that $\bigwedge^r T_X(kH)$ is $H$-Ulrich. 

It remains to prove the two consequences.  First, take $k=1$.  If $\bigwedge^r T_X(H)$ were $H$-Ulrich, then the above formula would give $i=\frac{n(n-1)}{n+2r}$. If $n=2$, then $r=1$, and hence $i=1/2$, contradicting
$i\in\mathbb Z$.  If $n\geq 3$, then $0<i=\frac{n(n-1)}{n+2r}<n-1$. Since $i\in\mathbb Z$, this gives $1\leq i\leq n-2$. Thus $1\leq i+1\leq n$, which is impossible by the assertion proved above.  Hence $\bigwedge^r T_X(H)$ is not $H$-Ulrich for $n\geq 2$.

Next, take $k=2$ and assume $n\geq 3$.  If $\bigwedge^r T_X(2H)$ were $H$-Ulrich, then $i=\frac{n(n-3)}{n+2r}$. Since $n\geq 3$, we have $i\geq 0$.  Moreover, $\frac{n(n-3)}{n+2r}<n-2$.  Indeed, this is equivalent to $n(n-3)<(n-2)(n+2r)$, since
$$
(n-2)(n+2r)-n(n-3)=n+2r(n-2)>0.
$$
Thus, since $i\in\mathbb Z$, we have $0\leq i\leq n-2$. Hence $1\leq i+2\leq n$, which is again impossible by the assertion proved above.  Therefore $\bigwedge^r T_X(2H)$ is not $H$-Ulrich for $n\geq 3$.
\end{proof}

\begin{proposition}\label{prop6}
Let $X$ be a smooth projective variety of dimension $n\geq2$, and let $H$ be a very ample divisor on $X$. Assume that $\operatorname{Pic}(X)=\mathbb Z[H]$, let $0<r<n$, and let $k\in\mathbb Z$. If $\bigwedge^r T_X(kH)$ is $H$-Ulrich, then $k\geq n+2$.
\end{proposition}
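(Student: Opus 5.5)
The plan is to reduce the statement to elementary arithmetic. The inputs are Corollary~\ref{cor5} and one positivity fact: an $H$-Ulrich bundle is nef by Proposition~\ref{prop:c1}(1), so its slope is non-negative. Since $\Pic(X)=\Z[H]$, we have $-K_X=iH$ for some $i\in\Z$. Assuming $E:=\bigwedge^rT_X(kH)$ is $H$-Ulrich, Corollary~\ref{cor5} gives $i=\frac{n(n+1-2k)}{n+2r}$ and forbids $1\le i+k\le n$. A direct computation gives
$$
i+k=\frac{n(n+1)+k(2r-n)}{n+2r}.
$$
By Proposition~\ref{prop1} with $M=\cO_X(kH)$,
$$
\mu_H(E)=\frac{r(n+1)+nk}{n+2r}H^n .
$$
The determinant of the nef bundle $E$ is nef, so $c_1(E)\cdot H^{n-1}\ge0$. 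This yields the slope inequality
$$
k\ge -\frac{r(n+1)}{n}.\qquad(\ast)
$$
The proof then splits according to the two remaining possibilities, $i+k\ge n+1$ or $i+k\le 0$, and according to the sign of $2r-n$.

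\emph{Case $i+k\ge n+1$.} Clearing denominators, this is equivalent to $k(2r-n)\ge 2r(n+1)$.
If $2r=n$, the left side is $0$ and the inequality fails.
If $2r<n$, then $k\le -\frac{2r(n+1)}{n-2r}<0$. Substituting into the numerator of $\mu_H(E)$ gives
$$
r(n+1)+nk\le r(n+1)\Bigl(1-\frac{2n}{n-2r}\Bigr)<0,
$$
which contradicts $(\ast)$.
If $2r>n$, then $k\ge \frac{2r(n+1)}{2r-n}$. This bound is decreasing in $r$, so it is at least its value at $r=n-1$, namely $\frac{2(n^2-1)}{n-2}$. Since $2(n^2-1)>n^2-4=(n+2)(n-2)$, this value exceeds $n+2$, so $k\ge n+2$. (Here $n\ge3$, because $2r>n$ with $r\le n-1$ is impossible for $n=2$.)

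\emph{Case $i+k\le 0$.} This is equivalent to $k(2r-n)\le -n(n+1)$.
If $2r=n$, the left side is $0$ and the inequality fails.
If $2r<n$, then $k\ge \frac{n(n+1)}{n-2r}\ge n+1$, with equality only if $n-2r=n$, i.e.\ $r=0$. Hence $k>n+1$, and since $k$ is an integer, $k\ge n+2$.
If $2r>n$, then $k\le -\frac{n(n+1)}{2r-n}$. This is strictly less than $-\frac{r(n+1)}{n}$, because $n^2>r(2r-n)$, i.e.\ $(2r+n)(r-n)<0$, which holds since $r<n$. This contradicts $(\ast)$.

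In every surviving case $k\ge n+2$, which proves the proposition.

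The only non-formal ingredient is the inequality $(\ast)$. I expect it to be the point needing the most care: nefness of $E$ gives nefness of $\det E$, hence $c_1(E)\cdot H^{n-1}\ge0$ because $H$ is ample. Everything else is bookkeeping of the sign of $2r-n$ in the two linear inequalities coming from Corollary~\ref{cor5}. In particular, no Nakano-type vanishing or use of the Fano property is needed.
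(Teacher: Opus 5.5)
Your proof is correct and uses the same two inputs as the paper: Corollary~\ref{cor5}, which forbids $1\le i+k\le n$, and the non-negativity of $\mu_H(E)$ coming from global generation (equivalently nefness) of the Ulrich bundle. The only difference is in the arithmetic. The paper rounds $(\ast)$ to $k\ge -r$ using integrality, then checks that the affine function $k\mapsto i+k$ lies in $[1,n]$ at the endpoints $k=-r$ and $k=n+1$. You instead solve the two complementary inequalities $i+k\ge n+1$ and $i+k\le 0$ according to the sign of $2r-n$, which as a by-product gives the sharper conclusions $n\neq 2r$ and $k\ge \frac{2r(n+1)}{2r-n}$ or $k\ge \frac{n(n+1)}{n-2r}$.
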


\begin{proof}
Set $E:=\bigwedge^r T_X(kH)$. Since $E$ is $H$-Ulrich, it is globally generated.  Hence $\det E$ is globally generated, and in particular $c_1(E)\cdot H^{n-1}\geq 0$. Equivalently, $\mu_H(E)\geq 0$. By Proposition~\ref{prop1}, applied to $M=\mathcal O_X(kH)$, we have
$$
\mu_H(E)=\frac{r(n+1)H^n+n k H^n}{n+2r}=\frac{n(r+k)+r}{n+2r}H^n.
$$
Since $H^n>0$, it follows that $n(r+k)+r\geq 0$. As $k\in \mathbb Z$ and $0<r<n$, this implies $k\geq -r$. Indeed, if $k\leq -r-1$, then $n(r+k)+r\leq -n+r<0$, which is a contradiction.

Since $\Pic(X)=\mathbb Z[H]$, we may write $-K_X=iH$ for some $i\in\mathbb Z$.  By Corollary~\ref{cor5}, we have
$$
i=\frac{n(n+1-2k)}{n+2r}.
$$

Put $j_{r,k}:=i+k$. Then
$$
\begin{aligned}
j_{r,k}&=\frac{n(n+1-2k)}{n+2r}+k \\
&=\frac{n(n+1)+(2r-n)k}{n+2r}.
\end{aligned}
$$

We claim that if $-r\leq k\leq n+1$, then $1\leq j_{r,k}\leq n$. Since $j_{r,k}$ is affine in $k$, it is enough to check the two endpoints $k=n+1$ and $k=-r$.

First, for $k=n+1$, we have $j_{r,n+1}=\frac{2r(n+1)}{n+2r}$. Then $j_{r,n+1}\geq 1$ is equivalent to $2r(n+1)\geq n+2r$, that is, $n(2r-1)\geq 0$, which is clear.  Also, $j_{r,n+1}\leq n$ is equivalent to $2r(n+1)\leq n(n+2r)$, that is, $n^2-2r\geq 0$. Since $0<r<n$ and $n\geq 2$, we indeed have $n^2-2r\geq n^2-2(n-1)>0$. Thus
$$
1\leq j_{r,n+1}\leq n.
$$

Next, for $k=-r$, we have $j_{r,-r}=\frac{n(n+1)+r(n-2r)}{n+2r}$. We prove first that $j_{r,-r}\geq 1$.  This is equivalent to $n(n+1)+r(n-2r)\geq n+2r$, or $n^2+nr-2r^2-2r\geq 0$. Set
$$
g(r):=n^2+nr-2r^2-2r.
$$
As a polynomial in $r$, the function $g(r)$ is concave. Hence, on the interval $1\leq r\leq n-1$, it is enough to check the endpoints.  We have $g(1)=n^2+n-4>0$ and $g(n-1)=n>0$. Therefore $j_{r,-r}\geq 1$. Moreover, $ j_{r,-r}\leq n$ is equivalent to $n(n+1)+r(n-2r)\leq n(n+2r)$, or $n(r-1)+2r^2\geq 0$, which is clear.  Thus
$$
1\leq j_{r,-r}\leq n.
$$
This proves the claim.

Now suppose that $k\leq n+1$. Since we have already proved $k\geq -r$, the claim gives
$$
1\leq j_{r,k}=i+k\leq n.
$$
This contradicts Corollary~\ref{cor5}.  Hence $k\geq n+2$.
\end{proof}


\begin{corollary}\label{cor7}
Let $X$ be a smooth projective variety of dimension $n$ with $2\leq n\leq 12$, and let $H$ be a very ample divisor on $X$. Assume that $\operatorname{Pic}(X)=\mathbb Z[H]$. Then $T_X(kH)$ is not $H$-Ulrich for any $k\in\mathbb Z$.
\end{corollary}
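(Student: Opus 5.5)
The plan is to combine the lower bound on $k$ from Proposition~\ref{prop6} with the integrality of the index from Corollary~\ref{cor5}. This reduces the statement to an arithmetic check for each $n$ with $2\le n\le 12$. Throughout we take $r=1$ and write $-K_X=iH$ with $i\in\Z$, which is possible because $\Pic(X)=\Z[H]$. Suppose that $T_X(kH)$ is $H$-Ulrich for some $k\in\Z$.

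\emph{Step 1.} Proposition~\ref{prop6} gives $k\ge n+2$. Corollary~\ref{cor5} with $r=1$ gives
$$
i=\frac{n(n+1-2k)}{n+2}.
$$
From $k\ge n+2$ we get $i\le -\frac{n(n+3)}{n+2}<0$, so $K_X$ is ample.

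\emph{Step 2.} We extract a congruence from $i\in\Z$. Since $n\equiv -2 \pmod{n+2}$, we have $n(n+1-2k)\equiv 2(2k+1) \pmod{n+2}$. Hence $(n+2)\mid 2(2k+1)$.
\begin{itemize}
\item If $n$ is odd, $n+2$ is odd, so this says $(n+2)\mid (2k+1)$.
\item If $n$ is even, it says $\tfrac{n+2}{2}\mid (2k+1)$. This is impossible when $\tfrac{n+2}{2}$ is even, i.e.\ when $n\equiv 2\pmod 4$. So $n=2,6,10$ are excluded at once.
\end{itemize}
For the remaining $n\in\{3,4,5,7,8,9,11,12\}$, the condition pins $k$ down to an arithmetic progression, and along it $i$ and $i+k=\frac{n(n+1)-(n-2)k}{n+2}$ are explicit.

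\emph{Step 3.} To exclude the surviving progressions I would use the full Ulrich vanishing, not only its consequence Corollary~\ref{cor4}. The vanishing gives $\chi(T_X((k-t)H))=0$ for $t=1,\dots,n$. Hence the Hilbert polynomial is
$$
P(t):=\chi\bigl(T_X((k+t)H)\bigr)=\frac{n\,H^n}{n!}\prod_{j=1}^n(t+j).
$$
On the other side, Lemma~\ref{thm3} identifies $T_X(mH)$ with $\Omega_X^{n-1}((i+m)H)$. Serre duality then turns $H^q(X,T_X((k-j)H))$ into $H^{n-q}(X,\Omega_X^1(-(i+k-j)H))^\vee$.

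For $j$ with $i+k-j<0$, the twist $-(i+k-j)H$ is ample. Nakano vanishing then kills all but one cohomology group in a range of degrees. Combined with Hirzebruch--Riemann--Roch in $c_1=-iH$ and the Todd class, this should yield further constraints. Concretely, I would compare the next coefficients of $P(t)$ with the Riemann--Roch expression and check integrality of $\chi(\mathcal O_X(tH))$ along the progression.

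I expect the hardest part to be this last exclusion for the residual $k$ when $n$ is odd or $n\equiv 0\pmod 4$. The first attempt would be a direct case check: for each such $n\le 12$, verify that the required value of $i$ is incompatible with the Hilbert polynomial constraint. The restriction $n\le 12$ suggests that the available numerical constraints run out beyond this range. If the Hilbert polynomial constraints do not suffice in some case, the fallback is the $\mu_H$-slope inequality: a globally generated $T_X(kH)$ with $K_X$ ample must satisfy it, and it should further bound $k$ relative to $i$.
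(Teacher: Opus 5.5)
Your Steps 1 and 2 are correct, and Step 2 contains a useful observation. For $n\equiv 2\pmod 4$, the $2$-adic valuation of $n(n+1-2k)$ is $1$ while $4\mid n+2$, so $i=\frac{n(n+1-2k)}{n+2}$ is never an integer. This settles $n=2,6,10$, and in fact every $n\equiv 2\pmod 4$.

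The gap is everything after that. For $n\in\{3,4,5,7,8,9,11,12\}$ the congruence leaves an \emph{infinite} progression of admissible $k\geq n+2$. For instance, $n=3$ forces $k\equiv 2\pmod 5$. Corollary~\ref{cor5} removes $k=7$ (where $i+k=1$), but the values $(k,i)=(12,-12),(17,-18),(22,-24),\dots$ all survive, since $i+k\leq 0$ there.

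Step~3 excludes none of these cases. It only says that Hilbert polynomial comparisons ``should yield further constraints'' and offers a fallback. The Ulrich condition already fixes $P(t)$ completely, so Riemann--Roch only turns this into linear relations among the unknown Chern numbers $c_2(X)\cdot H^{n-2},\dots$ in terms of $k$, $i$ and $H^n$. Without an additional inequality or a specific integrality argument, which you neither identify nor verify, no contradiction follows. A ``direct case check'' over $n\leq 12$ is also not finite, because for each $n$ the pair $(k,i)$ runs over an infinite family. Finally, Akizuki--Nakano vanishing for $\Omega_X^1$ twisted by an ample line bundle kills only $H^n$. For $1\leq j\leq n$ the groups you dualize already vanish by the Ulrich hypothesis, so it adds nothing there.

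What is missing is an \emph{upper} bound on $k$, and this is exactly the paper's proof. Proposition~\ref{prop6} gives $k\geq n+2$. Lopez--Raychaudhury \cite[Theorem~2]{LR24} show that if $T_X(kH)$ is $H$-Ulrich with $2\leq n\leq 12$, then $k\leq n+1$. The two bounds are incompatible, and no arithmetic on $i$ is needed. The hypothesis $2\leq n\leq 12$ in the statement is precisely the range of that external theorem, not a by-product of the congruence analysis you set up. To complete your argument you must either invoke that bound or genuinely prove a bound of the same strength, and Step~3 as written does neither.
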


\begin{proof}
Suppose that $T_X(kH)$ is $H$-Ulrich for some $k\in \mathbb Z$.  Since $T_X(kH)=\bigwedge^1 T_X(kH)$, Proposition~\ref{prop6}, applied with $r=1$, gives $k\geq n+2$. 

On the other hand, the very ample divisor $H$ gives an embedding $X\hookrightarrow \mathbb P^N$ such that $\mathcal O_X(H)\simeq \mathcal O_{\mathbb P^N}(1)|_X$. Since $2\leq n\leq 12$, by \cite[Theorem~2]{LR24}, if $T_X(kH)$ is $H$-Ulrich, then $k\leq n+1$. This contradicts $k\geq n+2$.  Therefore $T_X(kH)$ is not $H$-Ulrich for any $k\in \mathbb Z$.
\end{proof}

\section{The Picard number one case}
We finish the proof of the classification in Picard number one. Throughout this section, $X$ is a smooth projective variety of dimension $n\geq2$, $H$ is very ample, and $\bigwedge^rT_X$ is assumed to be $H$-Ulrich for some $0<r<n$. By Theorem~\ref{thm2}, the variety $X$ is Fano.

\begin{proposition}\label{prop12}
Let $X$ be a smooth projective variety of dimension $n\geq2$ with $\rho(X)=1$, and let $H$ be a very ample divisor on $X$. Assume that $\bigwedge^r T_X$ is $H$-Ulrich for some $0<r<n$. Then $\operatorname{Pic}(X)$ is generated by an ample line bundle $L$. Writing $\operatorname{Pic}(X)=\mathbb Z[L]$, $-K_X=i_XL$, and $H=aL$, we have $a=1$ or $a=2$. If $a=1$, then
$$
i_X=\frac{n(n+1)}{n+2r}.
$$
If $a=2$, then
$$
i_X=\frac{2n(n+1)}{n+2r}
\quad\text{and}\quad n\leq 2r.
$$
Furthermore, if $a=2$ and $n=2r$, then $(X,H)\simeq (\mathbb P^n,\mathcal O_{\mathbb P^n}(2))$. 
\end{proposition}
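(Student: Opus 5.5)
The plan is to combine Theorem~\ref{thm2} with the numerical identity of Proposition~\ref{prop1} and the Kobayashi--Ochiai bound on the Fano index.

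First, I establish that $\operatorname{Pic}(X)\simeq\mathbb Z$. By Theorem~\ref{thm2}, $X$ is Fano. By Kodaira vanishing, $H^1(X,\mathcal O_X)=H^2(X,\mathcal O_X)=0$. The exponential sequence then gives $\operatorname{Pic}(X)\simeq H^2(X,\mathbb Z)$. A Fano manifold is simply connected, so $H_1(X,\mathbb Z)=0$, and hence $H^2(X,\mathbb Z)$ is torsion-free by the universal coefficient theorem. Since $\rho(X)=1$, we get $\operatorname{Pic}(X)\simeq\mathbb Z$. Let $L$ be the generator with $L$ ample; it is the one with positive degree against $H$. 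Write $-K_X=i_XL$ and $H=aL$. Ampleness of $H$ and of $-K_X$ forces $a\geq1$ and $i_X\geq1$.

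Next I substitute into Proposition~\ref{prop1}. We have $(-K_X)\cdot H^{n-1}=i_Xa^{n-1}L^n$ and $H^n=a^nL^n$ with $L^n>0$. The identity $(-K_X)\cdot H^{n-1}=\frac{n(n+1)}{n+2r}H^n$ therefore gives
$$
i_X=\frac{a\,n(n+1)}{n+2r}.
$$
Kobayashi--Ochiai gives $i_X\leq n+1$. Hence $a n\leq n+2r$, that is, $a\leq 1+2r/n$. Since $r<n$, this gives $a<3$, so $a\in\{1,2\}$. The displayed formula then yields the stated value of $i_X$ in each case. When $a=2$, the inequality $2n\leq n+2r$ is exactly $n\leq 2r$.

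Finally, suppose $a=2$ and $n=2r$. Then $i_X=\frac{2n(n+1)}{2n}=n+1$. The equality case of Kobayashi--Ochiai gives $X\simeq\mathbb P^n$ with $L\simeq\mathcal O_{\mathbb P^n}(1)$. Hence $H=2L$ corresponds to $\mathcal O_{\mathbb P^n}(2)$.

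I expect no serious obstacle. The only point needing care is the torsion-freeness of $\operatorname{Pic}(X)$, which is what makes the integer $i_X$ and the ratio $a$ well defined; simple connectivity of Fano manifolds handles it.
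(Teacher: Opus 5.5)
Your proposal is correct and follows the paper's proof essentially step for step. Theorem~\ref{thm2} gives the Fano property and the intersection identity; substituting yields $i_X=\frac{a\,n(n+1)}{n+2r}$; and the Kobayashi--Ochiai bound together with its equality case gives $a\in\{1,2\}$, the inequality $n\leq 2r$ when $a=2$, and $(X,H)\simeq(\mathbb P^n,\mathcal O_{\mathbb P^n}(2))$ when $n=2r$. The only difference is that you justify the torsion-freeness of $\operatorname{Pic}(X)$ (Kodaira vanishing, the exponential sequence, and simple connectivity of Fano manifolds), which the paper simply asserts.
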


\begin{proof}
By Theorem~\ref{thm2}, $X$ is Fano and $(-K_X)\cdot H^{n-1}=\frac{n(n+1)}{n+2r}H^n$. Since $X$ is Fano, its Picard group is torsion-free. Thus $\rho(X)=1$ implies that we may write $\operatorname{Pic}(X)=\mathbb Z[L]$, $-K_X=i_XL$, $H=aL$ where $L$ is the ample generator, $i_X>0$, and $a>0$.

Substituting $-K_X=i_XL$ and $H=aL$ into the above equality, we obtain $i_XL\cdot (aL)^{n-1}=\frac{n(n+1)}{n+2r}(aL)^n$. Hence
$$
i_X a^{n-1}L^n=\frac{n(n+1)}{n+2r}a^nL^n.
$$
Since $a>0$ and $L^n>0$, this gives
$$
  i_X=
  \frac{n(n+1)a}{n+2r}.
$$

By the Kobayashi--Ochiai theorem \cite{KO73}, the Fano index satisfies $i_X\leq n+1$. Therefore $\frac{n(n+1)a}{n+2r}\leq n+1$. Equivalently, $na\leq n+2r$ or $n(a-1)\leq 2r$. Since $0<r<n$, we have $2r<2n$.  Hence $a-1<2$. As $a$ is a positive integer, it follows that 
$$
a=1 \quad\text{or}\quad a=2.
$$

If $a=1$, then the formula for $i_X$ gives $i_X=\frac{n(n+1)}{n+2r}$. 

If $a=2$, then $i_X=\frac{2n(n+1)}{n+2r}$. Moreover, applying again the inequality $i_X\leq n+1$, we obtain
$$
\frac{2n(n+1)}{n+2r}\leq n+1.
$$
Thus $2n\leq n+2r$, and hence $n\leq 2r$. 

Finally, assume that $a=2$ and $n=2r$.  Then
$$  
i_X= \frac{2n(n+1)}{n+2r}=\frac{2n(n+1)}{2n}=n+1.
$$
By the equality case of the Kobayashi--Ochiai theorem \cite{KO73}, we have $X\simeq \mathbb P^n$ and the ample generator $L$ corresponds to $\mathcal O_{\mathbb P^n}(1)$. Since $H=2L$, it follows that $(X,H)\simeq\left(\mathbb P^n,\mathcal O_{\mathbb P^n}(2)\right)$. 
\end{proof}

\begin{lemma}\label{lem13}
Let $r\geq 2$ be an integer. Then the bundle $\bigwedge^r T_{\mathbb P^{2r}}$ is not
$\mathcal O_{\mathbb P^{2r}}(2)$-Ulrich.
\end{lemma}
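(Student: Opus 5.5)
The plan is to test the Ulrich vanishing directly at the first twist $i=1$. There $H=\mathcal O_{\mathbb P^{2r}}(2)$, and we will show that $H^0$ is nonzero. Put $n=2r$.

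First, by the isomorphism used in the proof of Lemma~\ref{thm3}, together with $-K_{\mathbb P^n}=\mathcal O_{\mathbb P^n}(n+1)$, we have
$$
\bigwedge^rT_{\mathbb P^{2r}}\simeq\Omega^{r}_{\mathbb P^{2r}}\otimes\mathcal O_{\mathbb P^{2r}}(2r+1).
$$
Hence the Ulrich condition with $i=1$ would require
$$
H^0\bigl(\mathbb P^{2r},\Omega^r_{\mathbb P^{2r}}(2r-1)\bigr)=0.
$$

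Second, we invoke Bott's formula for the cohomology of twisted differential forms on projective space. It says that $H^0(\mathbb P^n,\Omega^p_{\mathbb P^n}(k))\neq0$ if and only if $k>p$, for $0\le p\le n$. With $p=r$ and $k=2r-1$, the condition $k>p$ reads $2r-1>r$, i.e.\ $r>1$. So for $r\geq2$ this $H^0$ is nonzero, which contradicts the Ulrich vanishing.

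If one prefers to avoid quoting Bott's formula, one can produce a section directly. Taking $\bigwedge^r$ of the Euler sequence shows that $\Omega^r_{\mathbb P^n}(r+1)$ is a quotient of $\bigwedge^{r+1}\mathcal O^{\oplus(n+1)}$, hence nonzero and globally generated. Multiplying a nonzero section by a nonzero section of $\mathcal O(2r-1-(r+1))=\mathcal O(r-2)$, which exists since $r\geq2$, gives a nonzero section of $\Omega^r(2r-1)$.

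There is no serious obstacle here. The only care needed is with the indexing, namely identifying the twist by $-H=\mathcal O(-2)$ at $i=1$ and checking the strict inequality $2r-1>r$. This is exactly the point where $r=1$ fails: $\Omega^1_{\mathbb P^2}(1)$ has no sections, consistent with $T_{\mathbb P^2}$ being $\mathcal O(2)$-Ulrich.
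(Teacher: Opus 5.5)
Your proposal is correct and is essentially the paper's argument: both test the Ulrich vanishing at $i=1$ and show $H^0\bigl(\mathbb P^{2r},\bigwedge^rT_{\mathbb P^{2r}}(-2)\bigr)\neq0$ using global generation that comes from the Euler sequence. Your second route is the dual-side version of this, the same one the paper itself uses later in Proposition~\ref{prop14}. The paper's proof of the lemma is slightly more direct: it applies $\bigwedge^r$ to the surjection $\mathcal O_{\mathbb P^{2r}}(1)^{\oplus(2r+1)}\twoheadrightarrow T_{\mathbb P^{2r}}$, so $\bigwedge^rT_{\mathbb P^{2r}}(-2)$ is a quotient of $\mathcal O_{\mathbb P^{2r}}(r-2)^{\oplus\binom{2r+1}{r}}$, and neither the duality isomorphism nor Bott's formula is needed.
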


\begin{proof}
Put $P:=\mathbb P^{2r}$. Suppose that $\bigwedge^r T_P$ is $\mathcal O_P(2)$-Ulrich. Consider the Euler sequence on $P$:
$$
0\longrightarrow\mathcal O_P \longrightarrow\mathcal O_P(1)^{\oplus 2r+1}\longrightarrow T_P\longrightarrow 0.
$$
Taking the $r$-th exterior power of the surjection
$$
\mathcal O_P(1)^{\oplus 2r+1}\twoheadrightarrow T_P,
$$
we obtain a surjection
$$
\bigwedge^r\left(\mathcal O_P(1)^{\oplus 2r+1}\right)\twoheadrightarrow \bigwedge^r T_P.
$$
Since $\bigwedge^r\left(\mathcal O_P(1)^{\oplus 2r+1}\right)\simeq\mathcal O_P(r)^{\oplus \binom{2r+1}{r}}$, we obtain $\mathcal O_P(r)^{\oplus \binom{2r+1}{r}}\twoheadrightarrow\bigwedge^r T_P$. Twisting by $\mathcal O_P(-2)$, we obtain
$$
\mathcal O_P(r-2)^{\oplus \binom{2r+1}{r}}\twoheadrightarrow\bigwedge^r T_P(-2).
$$
Since $r\geq 2$, the line bundle $\mathcal O_P(r-2)$ is globally generated.  Hence $\bigwedge^r T_P(-2)$ is globally generated as a quotient of a globally generated vector bundle.  Therefore
$$
H^0\left(P,\bigwedge^r T_P(-2)\right)\neq 0.
$$

On the other hand, if $\bigwedge^r T_P$ is $\mathcal O_P(2)$-Ulrich, then the Ulrich vanishing with $j=1$ gives $H^q\left(P,\bigwedge^r T_P\otimes \mathcal O_P(-2)\right)=0$ for every $q$.  In particular,
$$
H^0\left(P,\bigwedge^r T_P(-2)\right)=0,
$$
which contradicts the non-vanishing above.  Hence $\bigwedge^r T_{\mathbb P^{2r}}$ is not $\mathcal O_{\mathbb P^{2r}}(2)$-Ulrich.
\end{proof}

\begin{proposition}\label{prop14}
Let $X$ be a smooth projective variety of dimension $n\geq3$, and let $H$ be a very ample divisor on $X$. Assume that $2\leq r<n$, $\operatorname{Pic}(X)=\mathbb Z[L]$,
$H=2L$, and $-K_X=i_XL$. Then $\bigwedge^r T_X$ is not $H$-Ulrich.
\end{proposition}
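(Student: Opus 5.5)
The plan is to combine the numerical constraints of Proposition~\ref{prop12} with the cohomological vanishing of Lemma~\ref{thm3}, splitting according to the parity of $i_X$. Throughout we set $p:=n-r$, so that $1\leq p\leq n-2$ because $r\geq2$.

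\textbf{Reduction to $i_X$ odd.} Suppose $\bigwedge^rT_X$ is $H$-Ulrich. Proposition~\ref{prop12} gives $i_X=\frac{2n(n+1)}{n+2r}$ and $n\leq 2r$. If $n=2r$, then $(X,H)\simeq(\mathbb P^{2r},\mathcal O(2))$ with $r\geq2$, which Lemma~\ref{lem13} excludes. Hence $n<2r$, so $i_X<n+1$, that is, $i_X\leq n$.

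If $i_X=2j$ is even, then $1\leq j\leq n/2$ and $-K_X=jH$. In other words, $\mathcal O_X\simeq\mathcal O_X(K_X+jH)$, and Corollary~\ref{cor4} with $M=\mathcal O_X$ gives a contradiction; this is the Hodge class $h^p\neq0$ of Lemma~\ref{lem:hodge-nonvanishing}. So we may assume $i_X=2j+1$ is odd. By Lemma~\ref{thm3}, we then have
$$
H^q\bigl(X,\Omega_X^{p}((i_X-2i)L)\bigr)=0\qquad\text{for all } q \text{ and } 1\leq i\leq n .
$$
The twists $i_X-2i$ run over the $n$ odd integers $2j-1,\,2j-3,\,\dots,\,2j+1-2n$, and they include both $\pm1$.

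\textbf{Hilbert polynomial.} The polynomial $P(m):=\chi(\Omega^p_X(mL))$ has degree $n$ and leading coefficient $\binom np L^n/n!$. Since it vanishes at these $n$ points, it must factor as
$$
P(m)=\frac{\binom np L^n}{n!}\prod_{i=1}^n(m-i_X+2i).
$$
I would test this formula at even values of $m$ and compare with cohomology computed by other means.

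At $m=0$ the formula gives $\chi(\Omega^p_X)=\sum_q(-1)^qh^{p,q}$, and this is nonzero with sign $(-1)^j$. On a Fano variety of Picard number one, Kodaira--Akizuki--Nakano vanishing makes all $h^{p,q}$ with $q\neq p$ small. The hope is to compare $(-1)^p h^{p,p}$ against the explicit value of the product and reach a contradiction in sign or size.

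Alternatively, I would use Serre duality, $P(m)=(-1)^nP'(-m)$ where $P'(m):=\chi(\Omega^{n-p}_X(mL))$. Under this duality, the dual Ulrich bundle $E^\vee(K_X+(n+1)H)\simeq\Omega_X^{r}\bigl((2n+2-i_X)L\bigr)$ gives a second family of vanishings, this time for $\Omega^r_X$. Note that $2n+2-i_X$ is also odd.

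\textbf{Positive twists.} At large even twists I would use global generation. Since $\Omega^p_{\mathbb P^N}(p+1)$ is globally generated and surjects onto $\Omega^p_X\bigl(2(p+1)L\bigr)$, the sheaf $\Omega^p_X(2(p+1)L)$ is globally generated and nonzero. Combined with Kodaira--Akizuki--Nakano vanishing of higher cohomology, this gives $P(m)=h^0>0$ for $m$ even and large, which constrains the sign pattern of the product.

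The main obstacle is the odd case. The parity argument alone leaves it open, and one must show that the explicit factorization of $P(m)$ is incompatible with the known Hodge-theoretic and global-generation information. The first thing I would try is evaluating the factorization at $m=0$ and at the even twist $m=i_X-2i+1$ lying between the two vanishing twists $\pm1$. At such a twist, nonvanishing of $H^p$ (for $m=0$) or of $H^0$ (after global generation) should contradict the sign predicted by the product.
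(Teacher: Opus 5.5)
\textbf{The odd-$i_X$ case is left open, and the tools you propose cannot close it.} Your reductions are correct and match the paper: Corollary~\ref{cor4} excludes $i_X$ even, and Proposition~\ref{prop12} with Lemma~\ref{lem13} gives $n<2r$. But the remaining case is the whole content of the proposition. Your factorization $\chi(\Omega_X^p(mL))=\frac{\binom np L^n}{n!}\prod_{i=1}^n(m-i_X+2i)$ is just the universal Ulrich Hilbert polynomial $\chi(E(tH))=\frac{\operatorname{rk}(E)H^n}{n!}\prod_{i=1}^n(t+i)$ rewritten with $m=i_X+2t$. So a contradiction would need independent knowledge of $\chi(\Omega_X^p(mL))$ at some even $m$, and you have none.

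At $m=0$ you would need the sign of $\sum_q(-1)^qh^{p,q}$. Kodaira--Akizuki--Nakano says nothing about the off-diagonal middle Hodge numbers: it needs an ample twist and only kills $q>n-p$. These numbers are not small on Fano manifolds of Picard number one; for instance $h^{1,2}=5$ for a cubic threefold. Knowing only $H^p(X,\Omega_X^p)\neq0$ does not fix the sign of $\chi$. For large $m$, positivity of $P(m)$ already follows from the leading coefficient, so it constrains nothing. For moderate $m$, KAN does not give $P(m)=h^0$. The numerics genuinely allow this case: $n=20$, $r=18$, $i_X=15$ satisfies $i_X=\frac{2n(n+1)}{n+2r}$ with $i_X$ odd and $n<2r$.

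\textbf{What is missing} is the paper's argument aimed at the single vanishing $H^0(X,E(-H))=H^0(X,\Omega_X^p((i_X-2)L))=0$.
\begin{enumerate}
\item Show $n$ is even. If $n$ were odd, then $n+2r$ is odd and divides $n(n+1)$, forcing $i_X$ to be even.
\item Set $C:=n+1-i_X$ and $S:=2r-n$; both are positive and even. The relation $(n+2r)i_X=2n(n+1)$ becomes $n(S-2C)=S(C-1)$. This gives $S>2C$, and since $S=2C+2$ would force $n=C^2-1$ to be odd, it gives $S\geq 2C+4$.
\item Run the Hilbert polynomial argument on $\chi(tL)$, not on $\chi(\Omega_X^p(mL))$. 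Kodaira gives roots at $t=-1,\dots,-(i_X-1)$. If $H^0(X,mL)=0$ for every odd $1\leq m\leq S-C-3$, then each such $m$ is a root, using Kodaira and $mL=K_X+\frac{i_X+m}{2}H$. Serre duality with $n$ even makes each $-i_X-m$ a root as well. That totals $n+S-2C-2\geq n+2$ roots, which is impossible. Hence some odd $m\leq S-C-3$ has $H^0(X,mL)\neq0$.
\item With $q:=\frac{i_X-2-m}{2}$ one gets $q\geq p+1$, so $\Omega_X^p(qH)$ is globally generated. Your observation about $\Omega^p_{\mathbb P^N}(p+1)$ is exactly the right tool, but it has to be used here rather than for the sign of $P$ at large $m$.
\item Multiply a nonzero section of $\Omega_X^p(qH)$ by a nonzero section of $mL$. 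This gives a nonzero section of $\Omega_X^p((i_X-2)L)\simeq E(-H)$, contradicting the Ulrich vanishing.
\end{enumerate}
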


\begin{proof}
Suppose that $E:=\bigwedge^r T_X$ is $H$-Ulrich.  By Proposition~\ref{prop12}, applied with $a=2$, we have $i_X=\frac{2n(n+1)}{n+2r}$ and $n\leq 2r$. 

\begin{claim}\label{c1}
The index $i_X$ is odd.
\end{claim}

\begin{proof}
Suppose that $i_X$ is even.  Write $i_X=2m$ for some $m\in\mathbb Z$.  Since $-K_X=i_XL$ and $H=2L$, we obtain $-K_X=mH$. Equivalently, $\cO_X\simeq \cO_X(K_X+mH)$. Moreover, by \cite{KO73}, $i_X\leq n+1$.  Hence
$$
1\leq m=\frac{i_X}{2}\leq \frac{n+1}{2}\leq n.
$$
Thus Corollary~\ref{cor4}, applied to $M=\cO_X$ and $j=m$, implies that $\bigwedge^r T_X$ is not $H$-Ulrich.  This contradicts our assumption.  Therefore $i_X$ is odd.
\end{proof}

\begin{claim}\label{c2}
$r<n<2r$.
\end{claim}

\begin{proof}
The inequality $r<n$ is part of the assumption.  By Proposition~\ref{prop12}, we have $n\leq 2r$. Suppose that $n=2r$.  Then Proposition~\ref{prop12} gives $(X,H)\simeq \left(\mathbb P^{2r},\cO_{\mathbb P^{2r}}(2)\right)$. Since $r\geq2$, Lemma~\ref{lem13} says that $\bigwedge^r T_{\mathbb P^{2r}}$ is not $\cO_{\mathbb P^{2r}}(2)$-Ulrich.  This contradicts the assumption that $\bigwedge^rT_X$ is $H$-Ulrich.  Hence $n<2r$.
\end{proof}

\begin{claim}\label{c3}
The integer $n$ is even.
\end{claim}

\begin{proof}
Suppose that $n$ is odd.  Then $n+2r$ is also odd.  Since $i_X=\frac{2n(n+1)}{n+2r}$ is an integer and $\gcd(n+2r,2)=1$, the integer $n+2r$ divides $n(n+1)$.  Hence $i_X=2\cdot \frac{n(n+1)}{n+2r}$ is even.  This contradicts the previous claim.  Therefore $n$ is even.
\end{proof}

Set $C:=n+1-i_X$, $S:=2r-n$. By Claim~\ref{c2}, $S>0$.  Moreover, since $i_X=\frac{2n(n+1)}{n+2r}$ and $n<2r$, we have $i_X<n+1$, so $C>0$.  By Claim~\ref{c1} and Claim~\ref{c3}, both $C$ and $S$ are even integers.

We now rewrite the equality $(n+2r)i_X=2n(n+1)$. Since
$$
n+2r=2n+S
\qquad\text{and}\qquad
i_X=n+1-C,
$$
we obtain $(2n+S)(n+1-C)=2n(n+1)$. Thus $S(n+1-C)=2nC$. Equivalently, $n(S-2C)=S(C-1)$. Since $C$ is a positive even integer, we have $C\geq2$.  Therefore $S(C-1)>0$, and hence $S>2C$. If $S=2C+2$, then $2n=2(C+1)(C-1)$, so $n=C^2-1$. But $C$ is even, and hence $C^2-1$ is odd.  This contradicts the fact that $n$ is even.  Therefore $S\neq 2C+2$. Since $S$ and $C$ are even and $S>2C$, we conclude that $S\geq 2C+4$. 

\begin{claim}\label{c4}
There exists an odd integer $m$ with $1\leq m\leq S-C-3$ such that $H^0(X,mL)\neq 0$.
\end{claim}

\begin{proof}
Suppose that $H^0(X,mL)=0$ for every odd integer $m$ with
$$
1\leq m\leq S-C-3.
$$
Notice that $S-C-3$ is a positive odd integer because $S$ and $C$ are even and $S\geq 2C+4$. Let $P(t):=\chi(X,tL)$ be the Hilbert polynomial with respect to $L$.  Let $m$ be a positive odd
integer with $1\leq m\leq S-C-3$. Since $i_X$ is odd, the integer $\frac{i_X+m}{2}$ is positive.  Moreover,
$$
mL=K_X+\frac{i_X+m}{2}H.
$$
By Kodaira vanishing, we have $H^q(X,mL)=0$ for every $q>0$.  By our assumption, $H^0(X,mL)=0$.  Hence $P(m)=0$. Since $n$ is even, Serre duality gives
$$
P(t)=P(-i_X-t).
$$
Therefore $P(-i_X-m)=0$ for every positive odd integer $m$ with $1\leq m\leq S-C-3$.

Next, let $j$ be an integer with $1\leq j\leq i_X-1$. Then $-jL=K_X+(i_X-j)L$. Since $(i_X-j)L$ is ample, Kodaira vanishing gives $H^q(X,-jL)=0$ for every $q>0$.  On the other hand, $H^0(X,-jL)=0$ because $-jL$ is anti-ample.  Hence $P(-j)=0$ for every $1\leq j\leq i_X-1$.

Thus $P(t)$ has at least $(i_X-1)+2\cdot\frac{S-C-2}{2}=i_X+S-C-3$ distinct roots.  Since $i_X=n+1-C$, this number is
$$
n+S-2C-2.
$$
Using $S\geq 2C+4$, we obtain $n+S-2C-2\geq n+2$. This is impossible because $P(t)$ is a nonzero polynomial of degree $n$. Therefore there exists an odd integer $m$ with $1\leq m\leq S-C-3$ such that $H^0(X,mL)\neq0$. 
\end{proof}

Now put $p:=n-r$. By Claim~\ref{c4}, we can choose an odd integer $m$ such that
$$
1\leq m\leq S-C-3\qquad\text{and}\qquad H^0(X,mL)\neq0.
$$
Set $q:=\frac{i_X-2-m}{2}$. Since $i_X$ and $m$ are odd, $q$ is an integer.  Moreover, 
$$
\begin{aligned}
q-p&=\frac{i_X-2-m}{2}-(n-r) \\
&=\frac{n+1-C-2-m-2n+2r}{2} \\
&=\frac{S-C-1-m}{2}.
\end{aligned}
$$
Since $m\leq S-C-3$, we have $q-p\geq1$. Hence $q>p$. 

Let $X\hookrightarrow \mathbb P^N$ be the embedding defined by $H$.  Since $q>p$, we see that $\Omega_{\mathbb P^N}^p(q)$ is globally generated. We recall this well-known fact for the reader's convenience. Indeed, applying Lemma~\ref{lem:exterior-filtration} to the Euler sequence
$$
0\longrightarrow \Omega_{\mathbb P^N}^1(1)
\longrightarrow
H^0(\mathbb P^N,\mathcal O_{\mathbb P^N}(1))\otimes \mathcal O_{\mathbb P^N}
\longrightarrow
\mathcal O_{\mathbb P^N}(1)
\longrightarrow 0,
$$
we obtain an exact sequence
$$
0
\longrightarrow
\Omega_{\mathbb P^N}^{p+1}(p+1)
\longrightarrow
\bigwedge^{p+1}H^0(\mathbb P^N,\mathcal O_{\mathbb P^N}(1))\otimes \mathcal O_{\mathbb P^N}
\longrightarrow
\Omega_{\mathbb P^N}^{p}(p+1)
\longrightarrow
0.
$$
After tensoring it with $\mathcal O_{\mathbb P^N}(q-p-1)$, the middle term becomes globally generated, since $q-p-1\geq 0$. This implies that the quotient $\Omega_{\mathbb P^N}^p(q)$ is globally generated.

Hence its restriction to $X$ is globally generated. The natural surjection
$$
\Omega_{\mathbb P^N}^p|_X\longrightarrow\Omega_X^p
$$
then shows that $\Omega_X^p(qH)$ is globally generated. Since $p=n-r$, this is a nonzero vector bundle.

Together with $H^0(X,mL)\neq0$, we obtain $H^0\left(X,\Omega_X^p(qH+mL)\right)\neq0$. By the definition of $q$, we have
$$
qH+mL=(2q+m)L=(i_X-2)L.
$$
Thus
$$
H^0\left(X,\Omega_X^p((i_X-2)L)\right)\neq0.
$$

On the other hand, since $p=n-r$, we have
$$
\bigwedge^r T_X\simeq\Omega_X^p\otimes \cO_X(-K_X)=\Omega_X^p(i_XL).
$$
Therefore
$$
E(-H)=\bigwedge^r T_X(-2L)\simeq\Omega_X^p((i_X-2)L).
$$
Hence $H^0(X,E(-H))\neq0$. This contradicts the Ulrich vanishing for $E$, because an $H$-Ulrich bundle satisfies $H^0(X,E(-H))=0$. Therefore $\bigwedge^rT_X$ is not $H$-Ulrich.
\end{proof}

\begin{theorem}\label{main}
Let $X$ be a smooth projective variety of dimension $n\geq 2$ with $\rho(X)=1$, and let $H$ be a very ample divisor on $X$. Assume that $\bigwedge^r T_X$ is $H$-Ulrich for some $0<r<n$. Then $(X,H) \simeq \left(\mathbb P^2,\mathcal O_{\mathbb P^2}(2)\right)$. Conversely, $T_{\mathbb P^2}$ is $\mathcal O_{\mathbb P^2}(2)$-Ulrich.
\end{theorem}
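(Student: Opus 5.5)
The plan is to combine Proposition~\ref{prop12} with a case analysis on $a$ and $r$, and then to check the converse on $\mathbb P^2$ directly using the Bott formula. By Proposition~\ref{prop12}, $X$ is Fano with $\operatorname{Pic}(X)=\mathbb Z[L]$, $-K_X=i_XL$ and $H=aL$, where $a\in\{1,2\}$.

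\emph{Case $a=1$.} Here $H=L$, so $-K_X=i_XH$ with $i_X=\frac{n(n+1)}{n+2r}$. Since $r\geq1$ we have $n+2r\geq n+2>n+1$. Hence $0<i_X<n$, and because $i_X$ is a positive integer, $1\leq i_X\leq n$. Then $\mathcal O_X\simeq\mathcal O_X(K_X+i_XH)$. Corollary~\ref{cor4}, applied with $M=\mathcal O_X$ and $j=i_X$, shows that $\bigwedge^rT_X$ is not $H$-Ulrich, which is a contradiction. This is exactly the Hodge-theoretic obstruction of Lemma~\ref{lem:hodge-nonvanishing}, entering through Corollary~\ref{cor4}.

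\emph{Case $a=2$.} Proposition~\ref{prop12} gives $n\leq 2r$. If $r\geq2$ and $n\geq3$, Proposition~\ref{prop14} gives a contradiction directly. If $r\geq2$ and $n=2$, this is incompatible with $r<n$. It remains to treat $r=1$. Then $n\leq2r=2$ together with $n\geq2$ forces $n=2=2r$. The equality case of Proposition~\ref{prop12} then yields $(X,H)\simeq(\mathbb P^2,\mathcal O_{\mathbb P^2}(2))$. The substantive difficulty, excluding $a=2$ with $r\geq2$, was already handled in Proposition~\ref{prop14}, so this step is only bookkeeping.

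\emph{Converse.} On $P=\mathbb P^2$ with $H=\mathcal O_P(2)$, we must check that $H^q(P,T_P(-2))=0$ and $H^q(P,T_P(-4))=0$ for all $q$. Since $T_P\simeq\Omega^1_P(3)$ (via $\bigwedge^1T_P\simeq\Omega_P^{1}\otimes\mathcal O_P(-K_P)$), these bundles are $\Omega^1_P(1)$ and $\Omega^1_P(-1)$. Bott's formula gives $H^q(\mathbb P^2,\Omega^1(t))=0$ for all $q$ when $t=\pm1$: the only nonzero groups with $p=1$ occur for $q=0,\,t>1$; for $q=1,\,t=0$; and for $q=2,\,t<-1$. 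This vanishing can also be read off from the Euler sequence twisted by $\mathcal O(-2)$ and $\mathcal O(-4)$, whose outer terms are sums of $\mathcal O(-1)$, $\mathcal O(-2)$ and $\mathcal O(-3)$, $\mathcal O(-4)$. For the $\mathcal O(-4)$ twist one uses that $H^2(\mathcal O(-3))^{\oplus3}\to H^2(\mathcal O(-1))=0$ and that $H^2(\mathcal O(-4))\to H^2(\mathcal O(-3))^{\oplus3}$ is an isomorphism of $3$-dimensional spaces, being dual to multiplication by the coordinates $H^0(\mathcal O)^{\oplus3}\to H^0(\mathcal O(1))$. Hence $T_{\mathbb P^2}$ is $\mathcal O_{\mathbb P^2}(2)$-Ulrich.
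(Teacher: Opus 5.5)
Your proof is correct and follows the paper's argument: for $a=1$ you apply Corollary~\ref{cor4} directly with $j=i_X\in[1,n-1]$, which is exactly what the paper's appeal to Proposition~\ref{prop6} with $k=0$ reduces to (via Corollary~\ref{cor5}), and for the converse you lead with Bott's formula for $\Omega^1_{\mathbb P^2}(\pm1)$ instead of the twisted Euler sequences. One small slip: in your alternative Euler-sequence check for the $\mathcal O(-4)$ twist, the group $H^2(\mathcal O(-1))$ does not occur (only $H^2(\mathcal O(-4))$ and $H^2(\mathcal O(-3))^{\oplus 3}$ do), but the isomorphism between these two that you cite already suffices, so the argument is unaffected.
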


\begin{proof}
By Theorem~\ref{thm2}, $X$ is Fano. By Proposition~\ref{prop12}, we may write $\operatorname{Pic}(X)=\mathbb Z[L]$, $-K_X=i_XL$, $H=aL$ with $a=1$ or $a=2$.

If $a=1$, then $\Pic(X)=\Z[H]$.  Proposition~\ref{prop6}, applied with $k=0$, shows that this case cannot occur.  Hence $a=2$.

If $r\geq2$, then Proposition~\ref{prop14} gives a contradiction.  Thus $r=1$.  By Proposition~\ref{prop12}, we have $n\leq2r=2$. Since $n\geq2$, this gives $n=2$.  Then $a=2$ and $n=2r$, so Proposition~\ref{prop12} yields $(X,H)\simeq(\P^2,\cO_{\P^2}(2))$. 

Finally, we check the converse.  Let $h$ denote the hyperplane class on $\P^2$.  We must show that
$$
H^q(\P^2,T_{\P^2}(-2h))=0, \qquad H^q(\P^2,T_{\P^2}(-4h))=0
$$
for every $q$.  Twisting the Euler sequence
$$
0\longrightarrow\cO_{\P^2}\longrightarrow\cO_{\P^2}(1)^{\oplus3}\longrightarrow T_{\P^2}\longrightarrow0
$$
by $\cO_{\P^2}(-2)$ gives
$$
0\longrightarrow\cO_{\P^2}(-2)\longrightarrow\cO_{\P^2}(-1)^{\oplus3}\longrightarrow T_{\P^2}(-2)\longrightarrow0.
$$
The cohomology of $\cO_{\P^2}(-2)$ and $\cO_{\P^2}(-1)$ vanishes in every degree. Hence
$$
H^q(\P^2,T_{\P^2}(-2))=0
$$
for every $q$.

Twisting the Euler sequence by $\cO_{\P^2}(-4)$ gives
$$
0\longrightarrow\cO_{\P^2}(-4)\longrightarrow\cO_{\P^2}(-3)^{\oplus3}\longrightarrow T_{\P^2}(-4)\longrightarrow0.
$$
Here the $H^0$ and $H^1$ groups of the two line bundles on the left vanish. For $H^2$, the induced map is dual, by Serre duality, to the natural multiplication map
$$
H^0(\P^2,\cO_{\P^2})^{\oplus3}\longrightarrow H^0(\P^2,\cO_{\P^2}(1)),
$$
which is an isomorphism. Hence
$$
H^q(\P^2,T_{\P^2}(-4))=0
$$
for every $q$.  Therefore $T_{\P^2}$ is $\cO_{\P^2}(2)$-Ulrich.
\end{proof}

\section{Further non-existence results}\label{sec:further}
The results in this section are independent of the Picard number one classification. We collect them here in order to separate these general non-existence statements from the proof of Theorem~\ref{main}.

Let us recall some related results. Benedetti et al. proved that the cotangent bundle is never Ulrich and classified the case where the tangent bundle itself is Ulrich \cite{BMPT23}. Casnati gave a characteristic-free treatment of tangent, cotangent, normal, and conormal bundles from the viewpoint of instanton bundles \cite{Cas24}. Twisted versions have also been studied in several directions: Lopez characterized the varieties for which $N_X(-kH)$ is Ulrich \cite{Lop25Normal}, Antonelli et al. proved that, in the nondegenerate case, the Ulrichness of $N_X^\vee(kH)$ forces $X$ to be a curve \cite{ACLR24}, and Lopez--Raychaudhury studied $T_X(kH)$ \cite{LR24}. More recently, Torres-Lopez--Zamora studied twisted syzygy and dual syzygy bundles \cite{TLZ26}, which for the embedding $X\subset\P^N$ are related to twists of $\Omega_{\P^N}^1(1)|_X$ and $T_{\P^N}(-1)|_X$, while Sarkar studied extensions of Ulrich bundles to the ambient projective space and related applications to restricted ambient bundles \cite{Sar26}. The elementary arguments below are included because they give the precise obstructions needed here for higher exterior powers, tensor powers, and the range of twists used in the preceding sections.

\begin{proposition}\label{prop9}
Let $X$ be a smooth projective variety of dimension $n$, and let $H$ be a very ample divisor on $X$. For any $0\leq p\leq n$ and $1\leq j\leq n$, $\Omega_X^p(jH)$ is not $H$-Ulrich. Moreover, for any $1\leq p\leq n$ and $1\leq j\leq n$, $(\Omega_X^1)^{\otimes p}(jH)$ is not $H$-Ulrich.
\end{proposition}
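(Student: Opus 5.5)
The plan is to apply the Ulrich vanishing directly with the twist $i=j$, as in Corollary~\ref{cor4}. Suppose $\Omega_X^p(jH)$ is $H$-Ulrich for some $0\leq p\leq n$ and $1\leq j\leq n$. Taking $i=j$ in Definition~\ref{def:ulrich} gives
$$
H^q\left(X,\Omega_X^p(jH)(-jH)\right)=H^q(X,\Omega_X^p)=0
$$
for every $q$. But Lemma~\ref{lem:hodge-nonvanishing} gives $H^p(X,\Omega_X^p)\neq0$, which is a contradiction. The case $p=0$ is included, since $H^0(X,\cO_X)\neq0$. This settles the first assertion without using any other earlier result.

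For the tensor powers, I will use the same twist $i=j$. This reduces the claim to showing that some cohomology group $H^q\bigl(X,(\Omega_X^1)^{\otimes p}\bigr)$ is nonzero for $1\leq p\leq n$. Since we work in characteristic zero, $\bigwedge^p\Omega_X^1$ is a direct summand of $(\Omega_X^1)^{\otimes p}$ via the antisymmetrization projector $\frac{1}{p!}\sum_{\sigma}\operatorname{sgn}(\sigma)\sigma$. Hence
$$
H^p\bigl(X,(\Omega_X^1)^{\otimes p}\bigr)\supseteq H^p(X,\Omega_X^p)\neq0,
$$
again by Lemma~\ref{lem:hodge-nonvanishing}. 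This contradicts the vanishing $H^p\bigl(X,(\Omega_X^1)^{\otimes p}(jH)(-jH)\bigr)=0$, which would follow from the Ulrich condition.

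There is no serious obstacle. The only point needing care is that the direct-summand splitting holds at the level of vector bundles. It does: the symmetric group $S_p$ acts on $(\Omega_X^1)^{\otimes p}$ by bundle automorphisms, and the sign idempotent splits off $\Omega_X^p$, so cohomology of the summand injects into cohomology of the tensor power. The ranges are also consistent: we need $1\leq j\leq n$ so that $i=j$ is an allowed Ulrich twist, and $p\leq n$ so that $\Omega_X^p\neq0$.
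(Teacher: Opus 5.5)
Your proof is correct and is essentially the paper's argument: you twist by $-jH$ using the Ulrich vanishing with $i=j$, and you contradict $H^p(X,\Omega_X^p)\neq 0$ from Lemma~\ref{lem:hodge-nonvanishing}, with the same alternating-idempotent splitting for the tensor powers. The only difference is that the paper treats $p=0$ and $p=n$ separately (the latter via Serre duality) and handles $1\leq p<n$ through Corollary~\ref{cor4}, whereas you treat all $0\leq p\leq n$ at once; this is legitimate, since the lemma covers the full range.
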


\begin{proof}
We first prove that $\Omega_X^p(jH)$ is not $H$-Ulrich. If $p=0$, then $\Omega_X^0(jH)\simeq\cO_X(jH)$. If this line bundle were $H$-Ulrich, then the Ulrich vanishing with $i=j$ would give $H^0(X,\cO_X)=0$, a contradiction.

Assume next that $1\leq p<n$.  Put
$$
r:=n-p \qquad\text{and}\qquad M:=\cO_X(K_X+jH).
$$
Then $1\leq r<n$, and
$$
\bigwedge^r T_X\otimes M\simeq\Omega_X^{n-r}\otimes \cO_X(-K_X)\otimes \cO_X(K_X+jH)\simeq\Omega_X^p(jH).
$$
Hence the assertion follows from Corollary~\ref{cor4}. It remains to consider the case $p=n$.  Suppose that $\Omega_X^n(jH)\simeq \cO_X(K_X+jH)$ is $H$-Ulrich.  Taking $i=j$ in the Ulrich vanishing, we obtain
$$
  H^q(X,\cO_X(K_X))=0
$$
for every $q$.  This contradicts Serre duality, since $H^n(X,\cO_X(K_X))\simeq H^0(X,\cO_X)^{\vee}\neq 0$. Thus $\Omega_X^p(jH)$ is not $H$-Ulrich for every $0\leq p\leq n$.

It remains to prove the assertion for tensor powers. Assume that $(\Omega_X^1)^{\otimes p}(jH)$ is $H$-Ulrich for some $1\leq p\leq n$ and $1\leq j\leq n$.  Twisting by $-jH$, the Ulrich vanishing gives
$$
H^q\left(X,(\Omega_X^1)^{\otimes p}\right)=0
$$
for every $q$.  In characteristic zero, $\Omega_X^p$ is a direct summand of $(\Omega_X^1)^{\otimes p}$ via the alternating idempotent. Hence $H^p(X,\Omega_X^p)$ is a direct summand of $H^p\left(X,(\Omega_X^1)^{\otimes p}\right)$.  By Lemma~\ref{lem:hodge-nonvanishing}, $H^p(X,\Omega_X^p)\neq0$, a contradiction. Therefore $(\Omega_X^1)^{\otimes p}(jH)$ is not $H$-Ulrich.
\end{proof}

\begin{proposition}\label{prop10}
Let $X$ be a smooth projective variety of dimension $n$, let $H$ be a very ample divisor on $X$, and let $\iota\colon X\hookrightarrow\mathbb P^N$ be the embedding defined by $H$. For any $0\leq p\leq n$ and $1\leq j\leq n$, $\Omega_{\mathbb P^N}^p|_X(jH)$ is not $H$-Ulrich.
\end{proposition}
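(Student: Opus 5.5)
The plan is to imitate Proposition~\ref{prop9}. The idea is to use the Ulrich vanishing at the twist $i=j$ to kill all cohomology of the untwisted bundle $\Omega_{\mathbb P^N}^p|_X$, and then contradict this with a Hodge-theoretic non-vanishing. Concretely, suppose that $\Omega_{\mathbb P^N}^p|_X(jH)$ is $H$-Ulrich for some $0\leq p\leq n$ and $1\leq j\leq n$. Taking $i=j$ in Definition~\ref{def:ulrich} gives
$$
H^q\left(X,\Omega_{\mathbb P^N}^p|_X\right)=0\qquad\text{for every } q.
$$

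We then split into cases according to $p$.

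\emph{Case $p=0$.} Here $\Omega_{\mathbb P^N}^0|_X=\cO_X$. The vanishing above would give $H^0(X,\cO_X)=0$, which is absurd.

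\emph{Case $1\leq p\leq n$.} Lemma~\ref{lem:ambient-hodge-nonvanishing} gives exactly
$$
H^p\left(X,\Omega_{\mathbb P^N}^p|_X\right)\neq0.
$$
Taking $q=p$ in the vanishing yields the contradiction. The non-vanishing comes from the class $\alpha_p$, the pullback of $h_{\mathbb P^N}^p$, which maps to $h^p\neq0$ under $\Omega_{\mathbb P^N}^p|_X\to\Omega_X^p$.

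I expect no serious obstacle, since the heavy lifting is already done in Lemma~\ref{lem:ambient-hodge-nonvanishing}. The only point to check is that the Ulrich range $1\leq i\leq n$ covers the chosen twist $i=j$, and it does because $1\leq j\leq n$ by hypothesis. The case $p=0$ needs separate treatment only because that lemma is stated for $p\geq1$.
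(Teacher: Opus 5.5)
Your proposal is correct and matches the paper's proof essentially step for step. You take the twist $i=j$ to kill all cohomology of $\Omega_{\mathbb P^N}^p|_X$. You then handle $p=0$ via $H^0(X,\cO_X)\neq0$ and $1\leq p\leq n$ via the non-vanishing $H^p(X,\Omega_{\mathbb P^N}^p|_X)\neq0$ of Lemma~\ref{lem:ambient-hodge-nonvanishing}.
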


\begin{proof}
Let $\iota\colon X\hookrightarrow \mathbb P^N$ be the embedding defined by $H$.  Put
$$
h_{\mathbb P^N}:=c_1(\mathcal O_{\mathbb P^N}(1)),
\qquad
h:=\iota^\ast h_{\mathbb P^N}=c_1(\mathcal O_X(H)).
$$

First, we consider the case $p=0$.  Suppose that $\Omega_{\mathbb P^N}^0|_X(jH)\simeq \mathcal O_X(jH)$ is $H$-Ulrich.  Taking $i=j$ in the Ulrich vanishing, we obtain $H^q(X,\mathcal O_X)=0$ for every $q$.  This contradicts $H^0(X,\mathcal O_X)\neq 0$. Hence $\mathcal O_X(jH)$ is not $H$-Ulrich.

Assume next that $1\leq p\leq n$. By Lemma~\ref{lem:ambient-hodge-nonvanishing}, we have
$$
  H^p\left(X,\Omega_{\mathbb P^N}^p|_X\right)\neq 0.
$$
If $\Omega_{\mathbb P^N}^p|_X(jH)$ were $H$-Ulrich, then twisting by $-jH$ would give $H^p\left(X,\Omega_{\mathbb P^N}^p|_X\right)=0$, a contradiction. Therefore $\Omega_{\mathbb P^N}^p|_X(jH)$ is not $H$-Ulrich.
\end{proof}

\begin{proposition}\label{prop11}
Let $X$ be a smooth projective variety of dimension $n$, let $H$ be a very ample divisor on $X$, and let $\iota\colon X\hookrightarrow\mathbb P^N$ be the embedding defined by $H$. Put $N_X:=N_{X/\mathbb P^N}$. Let $B=T_{\mathbb P^N}|_X$ or $B=N_X$. If $1\leq m\leq\rk B$ and $1\leq k+m\leq n$, then $\bigwedge^mB(kH)$ is not $H$-Ulrich.
\end{proposition}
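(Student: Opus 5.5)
The plan is to use a single Ulrich vanishing, chosen so that the twist cancels exactly, and to contradict it with a nonzero global section coming from the Euler sequence. Suppose that $\bigwedge^mB(kH)$ is $H$-Ulrich with $1\leq m\leq\rk B$ and $1\leq k+m\leq n$. Put $i:=k+m$. Since $1\leq i\leq n$, Definition~\ref{def:ulrich} with this $i$ gives
$$
H^q\Bigl(X,\bigwedge^mB(-mH)\Bigr)=0\qquad\text{for every }q,
$$
and in particular $H^0(X,\bigwedge^mB(-mH))=0$. It therefore suffices to show that $\bigwedge^mB(-mH)\simeq\bigwedge^m\bigl(B(-H)\bigr)$ has a nonzero global section.

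First consider $B=T_{\mathbb P^N}|_X$. Restricting the Euler sequence to $X$ and twisting by $-H$ gives a surjection
$$
H^0(\mathbb P^N,\mathcal O_{\mathbb P^N}(1))^\vee\otimes\mathcal O_X\twoheadrightarrow T_{\mathbb P^N}|_X(-H).
$$
Hence $T_{\mathbb P^N}|_X(-H)$ is globally generated. Taking $m$-th exterior powers gives a surjection from a trivial bundle onto $\bigwedge^m\bigl(T_{\mathbb P^N}|_X(-H)\bigr)=\bigwedge^mT_{\mathbb P^N}|_X(-mH)$, so this bundle is globally generated as well. It is nonzero because $1\leq m\leq\rk B=N$, so it has a nonzero global section.

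Next consider $B=N_X$. The normal sequence supplies a surjection $T_{\mathbb P^N}|_X\twoheadrightarrow N_X$. Composing with the surjection above, $N_X(-H)$ is a quotient of a trivial bundle and is therefore globally generated. Exactly as in the first case, $\bigwedge^mN_X(-mH)$ is then globally generated and nonzero, since $m\leq\rk N_X$. In both cases we obtain $H^0(X,\bigwedge^mB(-mH))\neq0$, which contradicts the vanishing displayed above.

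The only point needing care is the bookkeeping of the twist. One must check that $i=k+m$ lies in the allowed range $1\leq i\leq n$, which is exactly the hypothesis $1\leq k+m\leq n$. One must also check that $\bigwedge^mB(kH)\otimes\mathcal O_X(-iH)=\bigwedge^mB(-mH)$ is the exterior power of the globally generated bundle $B(-H)$. Beyond this, the argument only uses that exterior powers and quotients of globally generated bundles are globally generated.
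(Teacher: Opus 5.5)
Your proposal is correct and follows essentially the same route as the paper: choose $i=k+m$ in the Ulrich vanishing so that it forces $H^0\bigl(X,\bigwedge^m B(-mH)\bigr)=0$. Then contradict this using that $B(-H)$ is globally generated, via the Euler sequence for $T_{\mathbb P^N}|_X$ and the normal sequence for $N_X$, so that $\bigwedge^m(B(-H))$ is a nonzero globally generated bundle.
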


\begin{proof}
Since $H$ defines the embedding $\iota\colon X\hookrightarrow \mathbb P^N$, we have $\mathcal O_X(H)\simeq \mathcal O_{\mathbb P^N}(1)|_X$. First we show that $B(-H)$ is globally generated.  By the Euler sequence on $\mathbb P^N$,
$$
0\longrightarrow\mathcal O_{\mathbb P^N}(-1)\longrightarrow\mathcal O_{\mathbb P^N}^{\oplus N+1}\longrightarrow T_{\mathbb P^N}(-1)\longrightarrow 0,
$$
the vector bundle $T_{\mathbb P^N}(-1)$ is globally generated.  Hence $T_{\mathbb P^N}(-1)|_X$ is globally generated.  Therefore, if $B=T_{\mathbb P^N}|_X$, then $B(-H)=T_{\mathbb P^N}(-1)|_X$ is globally generated.

If $B=N_X$, then the normal sequence
$$
0\longrightarrow T_X \longrightarrow T_{\mathbb P^N}|_X\longrightarrow N_X\longrightarrow 0
$$
gives, after twisting by $\mathcal O_X(-H)$, a surjection
$$
T_{\mathbb P^N}(-1)|_X\longrightarrow N_X(-H)\longrightarrow 0.
$$
Since $T_{\mathbb P^N}(-1)|_X$ is globally generated, its quotient $N_X(-H)$ is also globally generated.  Thus, in both cases, $B(-H)$ is globally generated.

Now let $1\leq m\leq \rk B$.  Then $\bigwedge^m B(-mH)= \bigwedge^m(B(-H))$ is a nonzero globally generated vector bundle.  In particular,
$$
  H^0\left(X,\bigwedge^m B(-mH)\right)\neq 0.
$$

Suppose that $E:=\bigwedge^m B(kH)$ is $H$-Ulrich.  Put $j:=k+m$. By assumption, $1\leq j\leq n$. Hence, by the definition of an $H$-Ulrich bundle,
$$
  H^q\left(X,E(-jH)\right)=0
$$
for every $q$.  In particular, $H^0\left(X,E(-jH)\right)=0$. On the other hand, since $j=k+m$, we have
$$
\begin{aligned}
E(-jH)&=\bigwedge^m B(kH-jH) \\
&=\bigwedge^m B(-mH).
\end{aligned}
$$
Therefore
$$
H^0\left(X,E(-jH)\right)=H^0\left(X,\bigwedge^m B(-mH)\right)\neq 0,
$$
which is a contradiction.  Hence $\bigwedge^m B(kH)$ is not $H$-Ulrich.
\end{proof}

We end with a short observation on Enriques surfaces. It explains why the natural Ulrich bundle found by Borisov--Nuer is not excluded by Proposition~\ref{prop9}.

\begin{proposition}\label{prop:enriques}
Let $Y$ be an Enriques surface, let $H$ be a very ample divisor on $Y$, and let $L$ be a line bundle on $Y$. If $\Omega_Y^1\otimes L$ is $H$-Ulrich, then
$$
2c_1(L)\cdot H=3H^2.
$$
In particular, for every $k\in\mathbb Z$, neither $\Omega_Y^1(kH)$ nor $T_Y(kH)$ is $H$-Ulrich.
\end{proposition}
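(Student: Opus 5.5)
The plan is to apply the first Chern class formula of Proposition~\ref{prop:c1}(3) to $E:=\Omega_Y^1\otimes L$, using the fact that on an Enriques surface the canonical class is numerically trivial. Since $2K_Y\sim0$, we have $K_Y\equiv 0$ numerically, so $K_Y\cdot H=0$.

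First I compute the invariants of $E$. We have $\rk(E)=2$, and
$$
c_1(E)=c_1(\Omega_Y^1)+2c_1(L)=K_Y+2c_1(L),
$$
so $c_1(E)\cdot H=2c_1(L)\cdot H$. Proposition~\ref{prop:c1}(3) with $n=2$ gives
$$
c_1(E)\cdot H=\tfrac{2}{2}(K_Y+3H)\cdot H=3H^2 .
$$
Comparing the two expressions yields $2c_1(L)\cdot H=3H^2$, which is the main assertion.

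For the consequences, take $L=\cO_Y(kH)$. The identity becomes $2kH^2=3H^2$. Since $H^2>0$, this forces $2k=3$, which is impossible for an integer $k$.

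For $T_Y(kH)$, I use the rank-two isomorphism $T_Y\simeq\Omega_Y^1\otimes\det T_Y=\Omega_Y^1(-K_Y)$. This writes $T_Y(kH)=\Omega_Y^1\otimes L$ with $L=\cO_Y(-K_Y+kH)$. Then $c_1(L)\cdot H=kH^2$, again because $K_Y\cdot H=0$, and we reach the same contradiction $2k=3$.

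No step is a genuine obstacle. The only point needing care is that $K_Y$ is merely numerically trivial, not trivial, which is why the $T_Y$ case has to be routed through the isomorphism $T_Y\simeq\Omega_Y^1(-K_Y)$ rather than identified directly with a twist of $\Omega_Y^1$.
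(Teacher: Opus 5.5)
Your proof is correct and follows the paper's argument: apply Proposition~\ref{prop:c1}(3) to the rank-two bundle $\Omega_Y^1\otimes L$, which has $c_1=K_Y+2c_1(L)$, and use $K_Y\cdot H=0$. The only difference is in the $T_Y(kH)$ case, which is cosmetic: the paper applies the same Chern class formula directly to $c_1(T_Y(kH))=-K_Y+2kH$, so your detour through $T_Y\simeq\Omega_Y^1(-K_Y)$ is valid but not actually necessary.
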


\begin{proof}
Since $Y$ is an Enriques surface, the canonical divisor $K_Y$ is numerically trivial. Suppose that $E:=\Omega_Y^1\otimes L$ is $H$-Ulrich. Then $\operatorname{rk}(E)=2$ and
$$
c_1(E)=K_Y+2c_1(L).
$$
By Proposition~\ref{prop:c1}, applied to the surface $Y$, we have
$$
c_1(E)\cdot H=\frac{2}{2}(K_Y+3H)\cdot H=3H^2.
$$
Since $K_Y\cdot H=0$, this gives $2c_1(L)\cdot H=3H^2$.

If $L\simeq\mathcal O_Y(kH)$, then the equality above becomes $2kH^2=3H^2$, hence $2k=3$, which is impossible for $k\in\mathbb Z$. Thus $\Omega_Y^1(kH)$ is not $H$-Ulrich. The same argument applies to $T_Y(kH)$ because $c_1(T_Y(kH))=-K_Y+2kH$ and $K_Y\cdot H=0$.
\end{proof}

\begin{remark}\label{rem:enriques-bn}
Proposition~\ref{prop:enriques} only rules out twists by integral multiples of the chosen polarization $H$. This is compatible with the result of Borisov--Nuer. If $\Delta$ is a Fano polarization on an unnodal Enriques surface $Y$, then Borisov--Nuer prove that
$$
\Omega_Y^1(3\Delta)
$$
is a stable Ulrich bundle with respect to the very ample divisor $H=2\Delta$ \cite[Theorem~4.1]{BN18}. In this case the numerical condition in Proposition~\ref{prop:enriques} is exactly satisfied:
$$
2(3\Delta)\cdot (2\Delta)=3(2\Delta)^2.
$$
Thus the twist $3\Delta$ is not an integral multiple of $H=2\Delta$, and so it is not covered by the non-existence statement for $\Omega_Y^1(kH)$.
\end{remark}

\section*{Declaration of generative AI and AI-assisted technologies in the manuscript preparation process}
In late April and early May 2026, the second author used ChatGPT Pro
(GPT-5.5 Pro) as a research-assistance tool to discuss possible strategies for
the proof. In particular, the authors initially considered case-by-case
arguments based on the intersection-number restrictions for $-K_X$ in
\cite{LR24}. These arguments reached the case $r=2$, where the relevant
coindex is at most three, but they relied on classification results and did not
seem to extend uniformly to $r\geq3$. In subsequent discussions with the tool,
the use of the Hodge decomposition was considered, leading the authors to a
more uniform argument independent of $r$ and to the proof of
Proposition~\ref{prop14}. After using this tool, the authors checked all
mathematical arguments, edited the content as needed, and take full
responsibility for the content of the paper.

\bibliographystyle{amsalpha}
\bibliography{biblio}

@article{Bea18,
  author  = {Beauville, Arnaud},
  title   = {An introduction to {U}lrich bundles},
  journal = {Eur. J. Math.},
  volume  = {4},
  number  = {1},
  pages   = {26--36},
  year    = {2018},
  doi     = {10.1007/s40879-017-0154-4}
}

@article{BMPT23,
  author  = {Benedetti, Vladimiro and Montero, Pedro and {Prieto-Monta\~nez}, Yulieth and Troncoso, Sergio},
  title   = {Projective manifolds whose tangent bundle is {U}lrich},
  journal = {J. Algebra},
  volume  = {630},
  pages   = {248--273},
  year    = {2023},
  doi     = {10.1016/j.jalgebra.2023.03.046}
}

@incollection{Cos17,
  author    = {Coskun, Emre},
  title     = {A survey of {U}lrich bundles},
  booktitle = {Analytic and Algebraic Geometry},
  pages     = {85--106},
  publisher = {Springer},
  address   = {Singapore},
  year      = {2017},
  doi       = {10.1007/978-981-10-5648-2_6}
}

@book{CMP21,
  author    = {Costa, Laura and {Mir\'{o}-Roig}, Rosa Mar\'{i}a and {Pons-Llopis}, Joan},
  title     = {{U}lrich bundles---from commutative algebra to algebraic geometry},
  series    = {De Gruyter Studies in Mathematics},
  volume    = {77},
  publisher = {De Gruyter},
  address   = {Berlin},
  year      = {2021},
  doi       = {10.1515/9783110647686}
}

@article{CP91,
  author  = {Campana, Fr\'{e}d\'{e}ric and Peternell, Thomas},
  title   = {Projective manifolds whose tangent bundles are numerically effective},
  journal = {Math. Ann.},
  volume  = {289},
  number  = {1},
  pages   = {169--187},
  year    = {1991},
  doi     = {10.1007/BF01446566}
}

@article{DPS94,
  author  = {Demailly, Jean-Pierre and Peternell, Thomas and Schneider, Michael},
  title   = {Compact complex manifolds with numerically effective tangent bundles},
  journal = {J. Algebraic Geom.},
  volume  = {3},
  number  = {2},
  pages   = {295--345},
  year    = {1994}
}

@article{ESW03,
  author  = {Eisenbud, David and Schreyer, Frank-Olaf and Weyman, Jerzy},
  title   = {Resultants and {C}how forms via exterior syzygies},
  journal = {J. Amer. Math. Soc.},
  volume  = {16},
  number  = {3},
  pages   = {537--579},
  year    = {2003},
  doi     = {10.1090/S0894-0347-03-00423-5}
}

@book{Har77,
  author    = {Hartshorne, Robin},
  title     = {Algebraic Geometry},
  series    = {Graduate Texts in Mathematics},
  volume    = {52},
  publisher = {Springer-Verlag},
  address   = {New York-Heidelberg},
  year      = {1977},
  doi       = {10.1007/978-1-4757-3849-0}
}

@book{Huy05,
  author    = {Huybrechts, Daniel},
  title     = {Complex Geometry: An Introduction},
  series    = {Universitext},
  publisher = {Springer-Verlag},
  address   = {Berlin},
  year      = {2005},
  doi       = {10.1007/b137952}
}

@book{HL10,
  author    = {Huybrechts, Daniel and Lehn, Manfred},
  title     = {The Geometry of Moduli Spaces of Sheaves},
  series    = {Cambridge Mathematical Library},
  publisher = {Cambridge University Press},
  address   = {Cambridge},
  edition   = {Second},
  year      = {2010}
}

@article{KO73,
  author  = {Kobayashi, Shoshichi and Ochiai, Takushiro},
  title   = {Characterizations of complex projective spaces and hyperquadrics},
  journal = {J. Math. Kyoto Univ.},
  volume  = {13},
  number  = {1},
  pages   = {31--47},
  year    = {1973},
  doi     = {10.1215/kjm/1250523432}
}

@article{LR24,
  author  = {Lopez, Angelo Felice and Raychaudhury, Debaditya},
  title   = {On varieties with {U}lrich twisted tangent bundles},
  journal = {Ann. Mat. Pura Appl. (4)},
  volume  = {203},
  number  = {3},
  pages   = {1159--1193},
  year    = {2024},
  doi     = {10.1007/s10231-023-01397-w}
}

@article{Wat23,
  author  = {Watanabe, Kiwamu},
  title   = {Positivity of the exterior power of the tangent bundles},
  journal = {Proc. Japan Acad. Ser. A Math. Sci.},
  volume  = {99},
  number  = {10},
  pages   = {77--80},
  year    = {2023},
  doi     = {10.3792/pjaa.99.015}
}

@book{Kob87,
  author    = {Kobayashi, Shoshichi},
  title     = {Differential Geometry of Complex Vector Bundles},
  series    = {Publications of the Mathematical Society of Japan},
  volume    = {15},
  publisher = {Princeton University Press},
  address   = {Princeton, NJ},
  year      = {1987}
}

@article{BN18,
  author  = {Borisov, Lev and Nuer, Howard},
  title   = {{U}lrich bundles on {E}nriques surfaces},
  journal = {Int. Math. Res. Not. IMRN},
  number  = {13},
  pages   = {4171--4189},
  year    = {2018},
  doi     = {10.1093/imrn/rnw298}
}

@article{Cas24,
  author  = {Casnati, Gianfranco},
  title   = {Tangent, cotangent, normal and conormal bundles are almost never instanton bundles},
  journal = {Commun. Algebra},
  volume  = {52},
  number  = {2},
  pages   = {572--581},
  year    = {2024},
  doi     = {10.1080/00927872.2023.2245911}
}

@incollection{Lop25Normal,
  author    = {Lopez, Angelo Felice},
  title     = {On varieties with {U}lrich twisted normal bundles},
  booktitle = {Perspectives on Four Decades of Algebraic Geometry, Volume 2},
  series    = {Progress in Mathematics},
  volume    = {352},
  pages     = {1--12},
  publisher = {Birkh\"auser},
  address   = {Cham},
  year      = {2025},
  doi       = {10.1007/978-3-031-66234-8_1}
}

@article{ACLR24,
  author  = {Antonelli, Vincenzo and Casnati, Gianfranco and Lopez, Angelo Felice and Raychaudhury, Debaditya},
  title   = {On varieties with {U}lrich twisted conormal bundles},
  journal = {Proc. Amer. Math. Soc.},
  volume  = {152},
  number  = {11},
  pages   = {4645--4658},
  year    = {2024},
  doi     = {10.1090/proc/16986}
}

@misc{TLZ26,
  author        = {{Torres L\'opez}, Hugo and Zamora, Alexis G.},
  title         = {On the {U}lrichness of twisted syzygies and dual syzygies bundles},
  year          = {2026},
  eprint        = {2601.03406},
  archivePrefix = {arXiv},
  primaryClass  = {math.AG},
  note          = {arXiv:2601.03406 [math.AG]}
}

@misc{Sar26,
  author        = {Sarkar, Supravat},
  title         = {Extension of {U}lrich bundles},
  year          = {2026},
  eprint        = {2606.17429},
  archivePrefix = {arXiv},
  primaryClass  = {math.AG},
  note          = {arXiv:2606.17429 [math.AG]}
}
\end{document}